\newtheorem{theorem}{Theorem}[section]
\newtheorem{thm}[theorem]{Theorem}
\newtheorem{lem}[theorem]{Lemma}
\newtheorem{remark}[theorem]{Remark}
\newtheorem{proposition}[theorem]{Proposition}
\newtheorem{corollary}[theorem]{Corollary}
\newtheorem{hyp}[theorem]{HYPOTHESIS}
\theoremstyle{definition}
\newtheorem{defn}[theorem]{Definition}
\theoremstyle{remark}
\numberwithin{equation}{section}
 \DeclareMathAlphabet{\mathpzc}{OT1}{pzc}{m}{it}
 \DeclareMathAlphabet{\mathsfsl}{OT1}{cmss}{m}{sl}
  \newcommand{\FH}{\mathfrak{H}}
\newcommand{\dif}{\mathrm{d}}
\newcommand{\abs}[1]{\left\vert#1\right\vert}
 \newcommand{\innp}[1]{\langle {#1}\rangle}
\newcommand{\Be}{\begin{equation}}
\newcommand{\Ee}{\end{equation}}
\newcommand{\Bs}{\begin{split}}
\newcommand{\Es}{\end{split}}
\newcommand{\Bes}{\begin{equation*}}
\newcommand{\Ees}{\end{equation*}}
\newcommand{\BT}{\begin{thm}}
\newcommand{\ET}{\end{thm}}
\newcommand{\Bp}{\begin{proof}}
\newcommand{\Ep}{\end{proof}}
\newcommand{\BL}{\begin{lem}}
\newcommand{\EL}{\end{lem}}
\newcommand{\BP}{\begin{proposition}}
\newcommand{\EP}{\end{proposition}}
\newcommand{\BC}{\begin{corollary}}
\newcommand{\EC}{\end{corollary}}
\newcommand{\BR}{\begin{remark}}
\newcommand{\ER}{\end{remark}}
\newcommand{\BD}{\begin{defn}}
\newcommand{\ED}{\end{defn}}
\newcommand{\BI}{\begin{itemize}}
\newcommand{\EI}{\end{itemize}}
\begin{document}
\title{Parameter estimations for the Gaussian process with drift at discrete observation}
\author[]{Luo Shifei}
 \address{College of Mathematics and Information Science, Jiangxi Normal University, Nanchang, 330022, Jiangxi, China}
\email{}
\begin{abstract}

This paper first strictly proves that the growth of the second moment of a large class of Gaussian processes is not greater than $ct^{\beta}$($\beta<2$) and the covariance matrix is strictly positive definite. Under these two conditions, the maximum likelihood estimators of the mean and variance of such classes of drift Gaussian process are strongly consistent under broader growth of $t_n$. At the same time, the bivariate asymptotic normal distribution and the Berry-Ess\'{e}en bound of estimators are obtained by using the Stein$^{,}$s method via Malliavian calculus.\par

{\bf Keywords:} Test the hypothesis; Ornstein-Uhlenbeck process; Gaussian process; estimator.\\

\end{abstract}
\maketitle
\pagestyle{empty}
\section{Introduction}\label{sec1}
It is worth considering the statistical inference for stochastic differential equation (SDE) driven by the  general Gaussian process below
\begin{equation}\label{M1}
\mathrm{d}X_t=\mu\mathrm{d}t+\sigma\mathrm{d}G_t,\quad X_0=0,\quad t\geq0,
\end{equation}
where $\{G_t,t\geq0\}$ is a one-dimensional zero-mean Gaussian process. The covariance $R(s,t)=\mathbb{E}(G_sG_t)$ is a real-valued function on $[0,\infty ) \times [0,\infty)$ that is symmetric and positive definite.
 $\mu$ and $\sigma$ are unknown parameters, while $X_t$ can be treated as a time series with a definite trend.\par
Model (\ref{M1}) was extensively applied in various fields, including the finance, hydrology, and network data analysis. Norros.\cite{B1} first made reference model (\ref{M1}) as a traffic model and applied it to construct the stationary storage model. It was explored how its quality of service parameters relates to system parameters and the lower bound of tail distribution of storage model, where the input data is described by fractional Brownian motion with the Hurst parameter $H <\frac{1}{2}$. Duncan et al.\cite{D1} applied model \eqref{M1} to model the queue length of a single channel FIFO fluid queue with long-range  dependent input and constant service rate, so as to explore
the asymptotic behavior as well as the upper and lower bounds of the tail distribution for queuing models with fractional Brownian motion inputs. In these two papers, $\mu>0$ is taken as the average input rate and $\sigma$ is treated as the fluctuation coefficient. In paper \cite{E6}, model (\ref{M1}) was applied to construct the reflection fractional Brownian motion with drift coefficient, with the strong consistency and asymptotic normality of the estimator $\hat{\mu}_n$ of the parameter $\mu$ obtained, where $\mu$ denotes the average input rate minus the service rate.\par
In general, the parameters $\mu$ and $\sigma$ are unknown and can be estimated according to the observed value, which leads to the problem of parameter estimation about $\mu$ and $\sigma$ as extensively studied in recent years. Xiao et al.\cite{E3} applied model \eqref{M1} to the Chinese stock market when $G_t$ is the fractional Brownian motion of $H<1/2$, and showing the strong consistency of the statistics of $\mu$ and $\sigma$. We note that the power variation method can be applied to estimate the parameter $\sigma>0$ (see \cite{E7},\cite{E8}). Stibrek.\cite{E19} studied the hypothesis testing problem of $\mu$ when $\sigma=1$. In paper \cite{E5}, the strong consistency and asymptotic normality of the least squares estimation of $\mu$ were studied in drift fractional Brownian motion model when the parameter $\sigma=1$ and $H>1/2$
.\par
It is assumed that the process $X$ is observed at discrete time $\mathbf{t}=(t_1,t_2,\cdots,t_n)^{\prime}$, where $t_1<t_2<\cdots<t_n$, then the observed $\mathbf{X}=(X_{t_1},X_{t_2},\cdots,X_{t_n})^{\prime}$ is a random vector.
We introduce the notation
\begin{equation}
\mathbf{X}=\mu \mathbf{t}+\sigma \mathbf{G_t},
\end{equation}
where $\mathbf{G_t}=(G_{t_1},G_{t_2},...,G_{t_n})^{\prime}$ and $\mathbf{G_t}\sim N_n(\mathbf{0},\mathbf{V})$.\par
If the covariance matrix $\mathbf{V}$ of random vector $\mathbf{X}$ is strictly positive definite, there is a joint density function:
\begin{equation}
f(\mathbf{X})=\frac{1}{(\sqrt{2 \pi \sigma^{2}})^{n}\lvert \mathbf{V}\rvert^{1/2}}\exp (-\frac{1}{2 \sigma^{2}}(\mathbf{X}-\mu \mathbf{t})^{\prime}\mathbf{V}^{-1}(\mathbf{X}-\mu \mathbf{t}))
\end{equation}
And the exact maximum likelihood estimates of $\mu$ and $\sigma^2$ can be obtained in \cite{E9}.
\begin{equation}
\hat{\mu}_n=\frac{\mathbf{t}^{\prime} \mathbf{V}^{-1} \mathbf{X}}{\mathbf{t}^{\prime} \mathbf{V}^{-1} \mathbf{t}},
\end{equation}
\begin{equation}
\hat{\sigma}^{2}_n=\frac{1}{n} \frac{\left(\mathbf{X}^{\prime} \mathbf{V}^{-1} \mathbf{X}\right)\left(\mathbf{t}^{\prime} \mathbf{V}^{-1} \mathbf{t}\right)-\left(\mathbf{t}^{\prime} \mathbf{V}^{-1} \mathbf{X}\right)^{2}}{\mathbf{t}^{\prime} \mathbf{V}^{-1} \mathbf{t}}
\end{equation}
\begin{remark}
According to the factorization theorem, we know that $(\hat{\mu}_n,\hat{\sigma}^{2}_n)$ is a sufficient statistic of $(\mu,\sigma^2)$.
\end{remark}
When $G_t$ represents fractional Brownian motion with Hurst parameters $H\in(0,1)$,  Hu et al.\cite{E9} studied almost everywhere convergence, central limit Theorem and Berry-Ess\'{e}en bound of estimators $\hat{\mu}_n$ and $\hat{\sigma}^{2}_n$. the case of drift sub-fractional Brownian motion with Hurst parameter $H>1/2$ was study in \cite{E10}.
Bertin et al.\cite{d9} constructed the incomplete maximum likelihood estimation for the drift parameter $\mu$ as follows
$$\hat{\mu}=N\frac{\sum_{i,j=1}^{N^{\alpha}} j \Sigma_{i, j}^{-1} Y_{i}}{\sum_{i, j=1}^{N^{\alpha}} i j \Sigma_{i, j}^{-1}}.$$
and considered asymptotic properties of $\hat{\mu}_n$

In this paper, the asymptotic properties of the above-mentioned estimator $\hat{\mu}_n$ and $\hat{\sigma}_n^2$ are studied under the condition that the general Gaussian process satisfies two hypothesis as follows.

\begin{hyp}\label{as01}
 For any $t\geq0$, there exists a constant $c$, such that $\mathbb{E}(G_t^2)\leq ct^{\beta}$.
\end{hyp}
\begin{hyp}\label{as02}
 The covariance matrix $\mathbf{V}=(R(t_i,t_j))_{n\times n}$ is a strictly positive definite matrix.
\end{hyp}
Theorem \ref{D1} presents fractional Brownian motion, sub-fractional Brownian motion, and some other zero-mean Gaussian processes, and all of which satisfy the aforementioned  hypothesis.  Theorem \ref{D4} shows the statistical properties of this class of zero-mean Gaussian processes .
\begin{thm}\label{D1}
 The Gaussian process with the following covariance functions satisfies hypothesis \ref{as01} and \ref{as02}:
\begin{enumerate}
\item $R(t,s)=\begin{cases}\frac{\Gamma(1-K)}{K(2-K)}\left[t^{2HK}+s^{2HK}-(t^{2H}+s^{2H})^{K}\right]&K \in(0,1)\\ \frac{\Gamma(2-K)}{K(K-1)}\left[(t^{2H}+s^{2H})^{K}-t^{2HK}-s^{2HK}\right]&K\in(1,2)\end{cases},$\\
$H\in(0,1), H K\in(0,1)$.
\item $R(s,t)= \frac{1}{2}(s^{2H} + t^{2H} - (t-s)^{2H}),  H\in(0,1).$
\item $R(s,t)=\left(t+s\right)^{2 H }-(t-s)^{2H}, H\in(0,1).$
\item $R(t,s)=s^{2H}+t^{2H}-\frac{1}{2}\left((s+t)^{2H}+(t-s)^{2H}\right),\, H\in(0,1).$
\item $R(s,t)=(s+t)^{2H}-(\max(s,t))^{2H},H\in(0,\frac{1}{2}). $
\item $R(s, t)=(\max (s, t))^{2H}-(t-s)^{2H},H\in(0,\frac{1}{2}).$
\item $R(t,s)=\frac{1}{2}\left[t^{2H}+s^{2H}-K(t+s)^{2H}-(1-K)\lvert t-s\rvert^{2H}\right], H\in(0,\frac{1}{2}),K\in(0,1).$
\item $R(t,s)=\frac{1}{2^K}\left((s^{2H}+t^{2H})^K - \lvert t-s\rvert^{2HK}\right), H\in (0, 1),K \in (0, 1),HK\in(0,1)\,\,or\,\, H\in (0, 1),K \in (1, 2),HK\in(0,1).$
\item$ R(t,\, s)= (s^{2H}+t^{2H})^{K}-\frac12 \big[(t+s)^{2HK} + \lvert t-s\rvert^{2HK} ], H\in(0,1),K\in(0,1),HK\in(0,1) \,\,or\,\, H\in(0,\frac{1}{2}),K\in(0,1).$
\end{enumerate}
\end{thm}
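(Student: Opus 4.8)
To prove Theorem~\ref{D1} I would verify the two hypotheses separately: Hypothesis~\ref{as01} is a routine computation, while Hypothesis~\ref{as02} is the substantive point. For Hypothesis~\ref{as01} one simply sets $s=t$ in each of the nine covariances. In every case the terms built from $|t-s|$, $(t-s)^{2H}$ or $\max(s,t)$ either vanish or collapse, and what remains is a single power of $t$ with a constant prefactor: for instance $R(t,t)=\tfrac12(2t^{2H})=t^{2H}$ in case (2), $R(t,t)=2^{-K}(2t^{2H})^{K}=t^{2HK}$ in case (8), and $R(t,t)=(2t^{2H})^{K}-\tfrac12(2t)^{2HK}=(2^{K}-2^{2HK-1})\,t^{2HK}$ in case (9). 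In general $\mathbb{E}(G_t^{2})=R(t,t)=c_0\,t^{\beta}$ with $\beta=2HK$ (or $\beta=2H$ in the cases where $K$ does not enter the exponent) and $c_0$ an explicit constant; since $HK\in(0,1)$ (resp.\ $H\in(0,1)$) we get $\beta<2$, so any $c\ge c_0$ yields the bound for all $t\ge0$. The only thing to check is $c_0>0$, which is automatic from $\mathbb{E}(G_t^{2})\ge0$ but also follows from elementary inequalities for $x\mapsto 2^{x}$ in the stated ranges (e.g.\ $2^{K}>2^{2HK-1}$ in case (9), since $K-(2HK-1)=K(1-2H)+1>0$ whenever $K<1$ and $H<1$).

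For Hypothesis~\ref{as02}, recall that a covariance matrix is always positive semidefinite, so only strictness is at issue, and $\mathbf{V}=(R(t_i,t_j))_{n\times n}$ is strictly positive definite if and only if $G_{t_1},\dots,G_{t_n}$ are linearly independent in $L^{2}(\Omega,\mathbb{P})$, i.e.\ no combination $\sum_{i=1}^{n}a_iG_{t_i}$ with $(a_1,\dots,a_n)\neq0$ vanishes almost surely. I would establish this linear independence via a moving-average representation: each of these processes admits a Volterra-type representation $G_t=\int_{0}^{\infty}\mathcal{K}(t,u)\,dW_u$ against a standard Wiener process $W$, with $\mathcal{K}(t,\cdot)$ supported on $[0,t]$ — such kernels are classical for fractional Brownian motion (case (2)), sub-fractional Brownian motion and its weighted analogues (cases (4), (6), (7)), bifractional Brownian motion and its sub- and weighted variants (cases (1), (5), (8), (9)), and case (3). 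From such a representation, $\sum_i a_i G_{t_i}=0$ a.s.\ forces $\sum_i a_i\mathcal{K}(t_i,u)=0$ for a.e.\ $u$; restricting to $u\in(t_{n-1},t_n)$, where only $\mathcal{K}(t_n,\cdot)$ survives and is nonzero a.e., gives $a_n=0$, and a downward induction over the intervals $(t_{k-1},t_k)$ yields $a_1=\dots=a_n=0$. Equivalently, one may invoke the strong local nondeterminism of these processes, which gives $\mathrm{Var}\big(G_{t_k}\mid G_{t_1},\dots,G_{t_{k-1}}\big)>0$ directly, or simply cite the places in the literature where strict positive definiteness is proved for each individual family.

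Several cases admit shortcuts avoiding kernels: if $R(s,t)=R_0(\varphi(s),\varphi(t))$ with $\varphi$ strictly increasing and $R_0$ a covariance whose finite matrices are strictly positive definite, then so are those of $R$; if $R=R_1+R_2$ with $R_1$ of that type and $R_2$ merely positive semidefinite, the conclusion persists; and a nondegenerate finite linear combination of independent such processes inherits the property. These dispatch cases (2), (3) and one or two others at once. The genuinely delicate part, which I expect to be the main obstacle, is Hypothesis~\ref{as02} for the bifractional-type families with $K\in(1,2)$ (the second branches of cases (1) and (8), and case (9)): there $R$ is a \emph{difference} of two terms and the textbook Volterra kernel is not directly available. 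For these I would either use the extended bifractional Brownian motion construction, or — more robustly — write $x\mapsto x^{K}$ through a completely monotone / Bernstein integral representation, which expresses $R(s,t)$ as a positive mixture of covariances of fractional Brownian motion type and so reduces strict positivity to the already-known fBm case; throughout, the signs of the constants must be tracked carefully under the constraint $HK\in(0,1)$.
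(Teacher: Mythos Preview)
Your treatment of Hypothesis~\ref{as01} matches the paper's: setting $s=t$ collapses each $R(t,t)$ to a constant times $t^{2H}$ or $t^{2HK}$, and the paper simply declares this ``clear'' case by case. For Hypothesis~\ref{as02} your route differs substantially. The paper does \emph{not} use Volterra kernels on $[0,t]$; instead, for cases (1)--(4) it writes the covariance through scalar integral identities such as $|x|^{2H}=c_H^{-1}\int_0^\infty(1-e^{-u^2x^2})u^{-1-2H}\,du$ or $R(s,t)=\int_0^\infty(1-e^{-\theta s^{2H}})(1-e^{-\theta t^{2H}})\theta^{-1-K}\,d\theta$, expands in power series, and expresses $\mathbf{a}'\mathbf{V}\mathbf{a}$ as an integral of a sum of squares. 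When this vanishes it extracts, via Lemma~\ref{003}, a homogeneous linear system whose coefficient matrix is a Vandermonde determinant in $e^{-t_i^{2H}}$, $t_i$, or $t_i^{2}$, forcing $\mathbf{a}=0$. Cases (5) and (6) use double-integral representations from \cite{A13} and a layer-by-layer restriction of the integration domain --- that part \emph{is} close in spirit to your triangular induction. Cases (7)--(9) are handled exactly as you propose, by splitting $R$ as a sum of covariances already treated.

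Your framework is cleaner conceptually, but be aware that its central step has a hole for several of the nine families: the natural stochastic-integral representations for cases (1), (3), and the bifractional-type cases are \emph{not} of Volterra form on $[0,t]$. For instance, case~(1) is built from $X^{K}(t^{2H})=\int_0^\infty(1-e^{-\theta t^{2H}})\theta^{-(1+K)/2}\,dW_\theta$, whose kernel has full support in $\theta$, so your ``restrict to $u\in(t_{n-1},t_n)$'' argument does not apply; what one actually needs is linear independence of the functions $\theta\mapsto 1-e^{-\theta t_i^{2H}}$, which is precisely the paper's Vandermonde computation. Your fallbacks (strong local nondeterminism, or citing the literature) would close this, and your Bernstein/completely-monotone idea for $K\in(1,2)$ is essentially what underlies the paper's handling of case~(1) --- but as a self-contained proof the paper's explicit integral-plus-Vandermonde route gets there with less external input.
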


\begin{remark}
Despite many zero-mean Gaussian processes that satisfy hypothesis \ref{as01} and \ref{as02}, not all zero-mean Gaussian processes satisfy both of them simultaneously. For example, if the zero-mean Gaussian process has  covariance function $R(s,t) = \abs{st}^{2H}(H > 0)$ or $R(s,t) = cos(s-t)$, it does not satisfy both hypothesis \ref{as01} and \ref{as02}.
\end{remark}
\begin{thm}\label{D4}
\begin{enumerate}
\item When hypothesis \ref{as01} and \ref{as02} are satisfied and assume $c_{1}n^{\alpha}\leq t_n \leq c_{2}n^{\alpha}$, then, the estimators $\hat{\mu}_n$ and $\hat{\sigma}^2_n$ are strongly consistent, that is,
\begin{equation}
\hat{\mu}_n\stackrel{a.s.}{\longrightarrow}\mu \qquad as\qquad  n\rightarrow \infty,
\end{equation}
\begin{equation}
\hat{\sigma}^2_n\stackrel{a.s.}{\longrightarrow}\sigma^2  \qquad as\qquad  n\rightarrow \infty,
\end{equation}
where, $\alpha>0$, $c_1$ and $c_2$ are fixed constants.
\item  If the Gaussian process satisfies hypothesis \ref{as02}, then
$$
\left[\begin{array}{l}
Y_{n} \\
Q_{n}
\end{array}\right] \stackrel{l a w}{\longrightarrow} N_{2}\left(\mathbf{0}, \mathbf{I}\right)\quad as \quad n\longrightarrow\infty
$$
where~$Y_{n}=\frac{1}{\sigma^2}\sqrt{\mathbf{t}^{'}\mathbf{V}^{-1}\mathbf{t}}(\hat{\mu}_{n}-\mu)
,\,Q_{n}=\frac{1}{\sigma^{2}} \sqrt{\frac{n}{2}}\left(\hat{\sigma}_{n}^{2}-\sigma^{2}\right),$\,  with $\mathbf{I}$ denoting the second order identity matrix.
\item If the Gaussian process satisfies hypothesis \ref{as02}, the following properties are considered true:\\
\begin{enumerate}
\item $\sup\limits _{z \in \mathbf{R}}\lvert P\left(\bar{Q}_{n} \leq z\right)-\Phi(z)\rvert \leq \frac{\sqrt{2 n-1}}{n}$, where ~$\Phi(z)=\int_{-\infty}^{z} \frac{1}{\sqrt{2 \pi}} e^{-x^{2} / 2}\mathrm{d} x$.\\
\item $\frac{n}{\sqrt{2 n-1}}\left(P\left(\bar{Q}_{n} \leq z\right)-\Phi(z)\right) \rightarrow\infty,-\frac{\Phi^{(3)}(z)}{3}$, for every $z\in R$, as $n\rightarrow\infty $, where $\Phi^{(3)}(z)=\left(z^{2}-1\right) \frac{1}{\sqrt{2 \pi}} e^{-x^{2} / 2}$ is the third-order derivative of $\Phi(z)$.\\
\item There exists a constand $C\in(0,1)$ and an integer $n_0\geq1$, such that
$$C<\frac{n}{\sqrt{2 n-1}} \sup _{z \in \mathbf{R}}\lvert P\left(\bar{Q}_{n} \leq z\right)-\Phi(z)\rvert\leq 1.$$\\
for every $n\geq n_0$, where $\bar{Q}_{n} =Q_{n}-\mathbb{E}\left(Q_{n}\right)$.
\end{enumerate}
\end{enumerate}
\end{thm}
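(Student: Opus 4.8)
The plan is to reduce the whole theorem to classical facts about Gaussian and chi-square laws through a single diagonalisation. Since $\mathbf V$ is symmetric and, by Hypothesis~\ref{as02}, strictly positive definite, set $\mathbf Z=\mathbf V^{-1/2}\mathbf G_{\mathbf t}\sim N_n(\mathbf 0,\mathbf I_n)$ and $\mathbf u_n=\mathbf V^{-1/2}\mathbf t/\sqrt{\mathbf t^{\prime}\mathbf V^{-1}\mathbf t}$, a unit vector. Inserting $\mathbf X=\mu\mathbf t+\sigma\mathbf G_{\mathbf t}$ into $\hat\mu_n$ and $\hat\sigma^2_n$, all $\mu$- and $\mu\sigma$-terms cancel and one is left with $\sqrt{\mathbf t^{\prime}\mathbf V^{-1}\mathbf t}\,(\hat\mu_n-\mu)=\sigma\,\mathbf u_n^{\prime}\mathbf Z$ and $n\hat\sigma^2_n=\sigma^2(\|\mathbf Z\|^2-(\mathbf u_n^{\prime}\mathbf Z)^2)$. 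Hence $Y_n$ is \emph{exactly} $N(0,1)$ for every $n$; the quantity $\|\mathbf Z\|^2-(\mathbf u_n^{\prime}\mathbf Z)^2$ is the squared length of the projection of $\mathbf Z$ onto the orthogonal complement of $\mathbf u_n$, so $n\hat\sigma^2_n/\sigma^2\overset{d}{=}\chi^2_{n-1}$, and this projection is independent of $\mathbf u_n^{\prime}\mathbf Z$, i.e.\ $Y_n\perp Q_n$; in particular $\bar Q_n=Q_n-\mathbb E Q_n\overset{d}{=}(\chi^2_{n-1}-(n-1))/\sqrt{2n}$. Only Hypothesis~\ref{as02} enters this step, which is all that parts~(2) and~(3) need.

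For part~(1), the $\hat\sigma^2_n$-statement is then immediate: $P(|\hat\sigma^2_n-\sigma^2|>\varepsilon)=P(|\chi^2_{n-1}-n|>n\varepsilon/\sigma^2)$ is exponentially small in $n$ by a standard chi-square deviation bound (Laurent--Massart), so Borel--Cantelli gives $\hat\sigma^2_n\to\sigma^2$ a.s. For $\hat\mu_n$ I would bound $\mathbf t^{\prime}\mathbf V^{-1}\mathbf t$ from below via the variational identity $\mathbf t^{\prime}\mathbf V^{-1}\mathbf t=\sup_{\mathbf a\neq\mathbf 0}(\mathbf a^{\prime}\mathbf t)^2/(\mathbf a^{\prime}\mathbf V\mathbf a)\ge t_n^2/R(t_n,t_n)$; Hypothesis~\ref{as01} and $t_n\ge c_1n^\alpha$ then give $\mathbf t^{\prime}\mathbf V^{-1}\mathbf t\ge c^{-1}c_1^{2-\beta}n^{\alpha(2-\beta)}\to\infty$ since $\beta<2$, $\alpha>0$. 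Consequently $\hat\mu_n-\mu$ is centred Gaussian with variance $\le Cn^{-\alpha(2-\beta)}$, so $P(|\hat\mu_n-\mu|>\varepsilon)\le 2\exp(-\tfrac{\varepsilon^2}{2C}n^{\alpha(2-\beta)})$ is summable in $n$, and Borel--Cantelli yields $\hat\mu_n\to\mu$ a.s. (the upper bound $t_n\le c_2n^\alpha$ plays no role here).

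For part~(2), since $Y_n\sim N(0,1)$ exactly and $Y_n\perp Q_n$, it suffices that $Q_n\overset{d}{\to}N(0,1)$, which follows by Slutsky from the elementary CLT for the standardised sum $(\chi^2_{n-1}-(n-1))/\sqrt{2(n-1)}$ of i.i.d.\ centred variables $Z_i^2-1$; factorising characteristic functions then gives $(Y_n,Q_n)\overset{d}{\to}N_2(\mathbf 0,\mathbf I)$. For part~(3)(a) I would view $\bar Q_n=\tfrac1{\sqrt{2n}}\sum_{i=1}^{n-1}(Z_i^2-1)$ as an element of the second Wiener chaos, so $L^{-1}\bar Q_n=-\tfrac12\bar Q_n$ and a one-line computation gives $\langle D\bar Q_n,-DL^{-1}\bar Q_n\rangle_{\mathfrak H}=\tfrac12\|D\bar Q_n\|^2_{\mathfrak H}=\chi^2_{n-1}/n$; the Malliavin--Stein (Nourdin--Peccati) estimate for the Kolmogorov distance, valid for any centred $F\in\mathbb D^{1,2}$, then gives
\[
\sup_{z\in\mathbf R}\bigl|P(\bar Q_n\le z)-\Phi(z)\bigr|\le\sqrt{\mathbb E\bigl[(1-\chi^2_{n-1}/n)^2\bigr]}=\frac{\sqrt{2n-1}}{n},
\]
because $\mathbb E[(n-\chi^2_{n-1})^2]=\mathrm{Var}(\chi^2_{n-1})+1=2n-1$.

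For (3)(b) I would invoke the classical Edgeworth expansion $P\bigl((\chi^2_{n-1}-(n-1))/\sqrt{2(n-1)}\le y\bigr)=\Phi(y)-\tfrac{\sqrt2}{3\sqrt{n-1}}\Phi^{(3)}(y)+O(1/n)$, uniform in $y$ (the coefficient being $-\kappa_3/6$ with $\kappa_3=8(n-1)(2(n-1))^{-3/2}$), then substitute $y=z\sqrt{n/(n-1)}$ and absorb the $O(1/n)$ perturbations of the argument to obtain $P(\bar Q_n\le z)-\Phi(z)=-\tfrac{\sqrt2}{3\sqrt{n-1}}\Phi^{(3)}(z)+O(1/n)$; multiplying by $n/\sqrt{2n-1}$ and using $\sqrt2\,n/\sqrt{(n-1)(2n-1)}\to1$ gives the limit $-\Phi^{(3)}(z)/3$. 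Part~(3)(c) is then immediate: its upper bound is $n/\sqrt{2n-1}$ times (3)(a), and evaluating (3)(b) at $z=0$, where $|\Phi^{(3)}(0)|/3=1/(3\sqrt{2\pi})\in(0,1)$, supplies the strict lower bound for all large $n$ with any $C\in(0,1/(3\sqrt{2\pi}))$. The conceptually essential move is the opening diagonalisation; after it, parts~(1)--(2) are routine and (3)(a) is a one-line Malliavin--Stein estimate, so the only genuinely delicate point I foresee is (3)(b) — pinning down the chi-square Edgeworth leading term with its exact constant and reconciling the $\sqrt{2n}$-normalisation of $\bar Q_n$ with the natural $\sqrt{2(n-1)}$-one, with all remainders controlled uniformly in $z$, so that the constant $-\tfrac13$ emerges cleanly.
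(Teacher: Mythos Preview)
Your proposal is correct and, in several places, cleaner than the paper's own argument; the overall strategies differ noticeably. The paper never isolates the exact law $n\hat\sigma_n^2/\sigma^2\sim\chi^2_{n-1}$ nor the independence $Y_n\perp Q_n$ the way you do via the Cochran-type decomposition $\mathbf Z=\mathbf V^{-1/2}\mathbf G_{\mathbf t}$; instead it works throughout with moment identities (Lemma~\ref{zengliang est}) and the Malliavin calculus. For part~(1) the paper bounds $\mathbf t'\mathbf V^{-1}\mathbf t$ from below by $\sum t_i^2/\lambda_{\max}(\mathbf V)\ge \sum t_i^2/\sum t_i^\beta$ and then uses both the upper and lower growth assumptions on $t_n$ together with Chebyshev and Gaussian hypercontractivity; your variational inequality $\mathbf t'\mathbf V^{-1}\mathbf t\ge t_n^2/R(t_n,t_n)$ is sharper and shows, as you note, that the upper bound $t_n\le c_2 n^\alpha$ is superfluous. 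For part~(2) the paper establishes $Q_n\Rightarrow N(0,1)$ via $\|DQ_n\|_{\mathfrak H}^2=2\hat\sigma_n^2/\sigma^2$ and the fourth-moment theorem, then shows $\mathbb E[Y_nQ_n]=0$ and appeals to the multivariate chaos CLT (Theorem~6.2.3 in \cite{A18}); your route through exact independence plus the i.i.d.\ CLT for $\sum(Z_i^2-1)$ is more elementary. For part~(3) the paper treats (a)--(c) in one stroke by verifying the two hypotheses of Nourdin--Peccati's exact Berry--Ess\'een theorem (Theorem~3.1 in \cite{E15}): $\varphi(n)=\sqrt{2n-1}/n\to0$ and the joint convergence of $\bigl(\bar Q_n,(1-\tfrac12\|DQ_n\|_{\mathfrak H}^2)/\varphi(n)\bigr)$ to a bivariate normal with correlation $\rho=-1$, which then delivers the limit $-\Phi^{(3)}(z)/3$ directly. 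Your approach agrees with the paper for (a) but replaces the Malliavin--Stein asymptotic by a classical one-term Edgeworth expansion for $\chi^2_{n-1}$ in (b), which is perfectly valid here since $Z^2-1$ has a density (Cram\'er's condition) and the change of scale $\sqrt{2n}\leftrightarrow\sqrt{2(n-1)}$ contributes only $O(1/n)$; this buys self-containment at the price of a slightly more delicate bookkeeping of constants, exactly as you anticipated.
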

\begin{remark}
(a).By combining Theorem \ref{D1} and Theorem  Theorem \ref{D4}, it can be known that in Theorem \ref{D1}, the estimators $\hat{\mu}_n$ and $\hat{\sigma}_n^2$  of the 9 types of Gaussian processes with drift terms  have properties  of Theorem \ref{D4}, including the cases of fractional Brownian motion and sub-fractional Brownian motion as proved in the past (b).Compared to \cite{E9} and \cite{E10}, the first property in Theorem \ref{D4} leads to a broader growth of $t_n$ and the second property takes into account the asymptotic property of random vector $(\hat{\mu}_n,\hat{\sigma}_n^2)^{\prime}$.
\end{remark}
The rest of this  paper is organized as follows. In section 2,  the tools used to prove these results are introduced. In section 3, a detailed proof of the main results is presented. In the future, model (\ref{M1}) will be extend  to the case with intercept term and nonlinearity.
\section{Preliminary}\label{I3}
\subsection{ Zero-Mean Gaussian Processes}
\begin{enumerate}
\item Gaussian process $$X^{K}(t)=\int_{0}^{\infty}\left(1-e^{-\theta t}\right) \theta^{-\frac{1+K}{2}} \mathrm{d} W_{\theta},$$
where $W_{\theta}$ is Brownian motion, parameters $H\in(0,1),K\in(0,2)$. Initially, this process is proposed by Lei and Nualart in \cite{A10}. The value range of $K$ is extended from $(0,1)$ to $(0, 2)$ in Bardina and Bascompte \cite{A11}. When $X^{H,K}(t)=X^{K}(t^{2H})$, the covariance function of the process is presented by 1 of Theorem \ref{D1}.
\item The covariance function of fractional Brownian motion $\{B^{H}(t),t\geq0\}$ is presented by 2 of Theorem \ref{D1}.
\item Gaussian process
$$\left(\mathbb{U}(t)\right) \stackrel{f d d}{=} \left(\mathbb{B}^{\alpha / 2}(t)-\mathbb{B}^{\alpha / 2}(-t)\right),$$
the corresponding covariance function is presented by 2 of Theorem \ref{D1}, where $\mathbb{B}^{\alpha / 2}$ represents the two-sided fractional Brownian motion with parameter $\alpha/2=H\in(0,1)$  (see \cite{A17}).
\item The covariance function of sub-fractional Brownian motion $\{S^{H}(t),t\geq0\} $ is presented by 4 of Theorem \ref{D1}.
\item The covariance function of the Gaussian process $\{N(t),t\geq0\}$ is presented by 5 of Theorem \ref{D1}(see \cite{A13},\cite{A17}).
\item The covariance function of the Gaussian process $Z(t)=e^{-HKt}B^{H,K}(e^t)$ is presented by 6 of Theorem \ref{D1}(see \cite{A13}).
\item Gaussian process
$$M^{H, K}(t)=\sqrt{1-K}B^{H}(t)+\sqrt{\frac{HK}{\Gamma(1-2H)}}X^{2H}(t),$$
the corresponding covariance function is presented by 7 of Theorem \ref{D1} ,as obtained by combining fractional Brownian motion and $X^K(t)$ process (see \cite{A14}), where the parameters $H\in(0,\frac{1}{2})$,$K\in(0,1)$.
\item The covariance function of bi-fractional Brownian motion $\{B^{H,K}(t),t\geq0\}$ is presented by 8 of Theorem \ref{D1}, where the parameters $H\in (0, 1),\,K \in (0, 1),\,H K\in(0,1)$ (see \cite{A16}) or $H\in (0, 1),\,K \in (1, 2),\,H K\in(0,1)$ (see \cite{A19}).
\item The covariance function of the Gaussian process $\{S^{H,K}(t),t\geq0\}$ is presented by 9 of Theorem \ref{D1}, which is referred to as sub-bifractional Brownian motion with parameters $K\in(0,1)$  (see \cite{A20}), the parameter $K\in(1,2)$ is referred to as generalized sub-fractional Brownian motion (see \cite{A21}).
\end{enumerate}
\subsection{Gaussian Space}
Given a complete probability space $(\Omega, \mathscr{F}, P)$, the filtration $\mathscr{F}$ is generated by the Gaussian family $G$, where $G=\{G_t, t\in[0, T]\}$ is zero-mean Gaussian process with  covariance function
\begin{equation}
\mathbb{E}(G_{t}G_{s})=R(s,t),s,t\in[0,T].
\end{equation}
Suppose in addition that the covariance function $R$ is continuous. Let $\mathscr{E}$ denote the space of all real valued step functions on $[0,T]$. The Hilbert space $\mathfrak{H}$ is defined as the closure of $\mathscr{E}$ endowed
with the inner product
\begin{align*}
\innp{\mathbf{1}_{[a,b)},\,\mathbf{1}_{[c,d)}}_{\FH}=\mathbb{E}\big(( G_b-G_a)( G_d-G_c) \big).
\end{align*}

We denote $G=\{G(h), h \in \mathfrak{H}\}$ as the isonormal Gaussian process on the probability space $(\Omega, \mathcal{F}, P)$, indexed by the elements in the Hilbert space $\mathfrak{H}$. In other words, $G$ is a Gaussian family of random variables such that
   $$\mathbb{E}(G) = \mathbb{E}(G(h)) = 0, \quad \mathbb{E}(G(g)G(h)) = \langle g,h \rangle_{\mathfrak{H}} \quad g, h \in \mathfrak{H}.$$

The following proposition comes from Theorem 2.3 of \cite{E12}, which gives a concrete representation of the inner product operation in Hilbert space $\mathfrak{H}$.
\begin{proposition}\label{main prelim}
Denote $\mathcal{V}_{[0,T]}$ as the set of bounded variation functions on $[0,T]$. Then $\mathcal{V}_{[0,T]}$ is dense in $\FH$  and we have
\begin{align} \label{innp fg30}
\innp{{f,g}}_{\FH}=\int_{[0,T]^2} R(t,s) \nu_f( \dif t) \nu_{g}( \dif s),\qquad \forall f,\, g\in \mathcal{V}_{[0,T]},
\end{align}
where $\nu_{g}$ is the Lebesgue-Stieljes signed measure associated with $g^0$ defined as
\begin{equation}
g^0(x)=\left\{
      \begin{array}{ll}
 g(x), & \quad \text{if } x\in [0,T] ;\\
0, &\quad \text{otherwise }.
 \end{array}
\right.
\end{equation}\par
\end{proposition}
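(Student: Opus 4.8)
The plan is to establish \eqref{innp fg30} first on step functions, where it is just a restatement of the defining inner product on $\mathscr{E}$, and then to transfer it to all of $\mathcal{V}_{[0,T]}$ by approximating a bounded variation function with Riemann--Stieltjes step functions and passing to the limit; the limit is legitimate because $R$ is continuous, hence bounded and uniformly continuous, on the compact square $[0,T]^2$.

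For the step-function case, take $f=\one_{[a,b)}$ and $g=\one_{[c,d)}$ with $0\le a<b\le T$ and $0\le c<d\le T$. Then $f^0=\one_{[a,b)}$ on all of $\Real$, so $\nu_f=\delta_a-\delta_b$, and likewise $\nu_g=\delta_c-\delta_d$; therefore
\[
\int_{[0,T]^2}R(t,s)\,\nu_f(\dif t)\,\nu_g(\dif s)=R(a,c)-R(a,d)-R(b,c)+R(b,d)=\E\big((G_b-G_a)(G_d-G_c)\big),
\]
which is exactly $\innp{f,g}_{\FH}$ by the defining relation of the inner product on $\mathscr{E}$. Since both sides of \eqref{innp fg30} are bilinear, the identity holds for every pair of functions in $\mathscr{E}$.

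Now fix $f,g\in\mathcal{V}_{[0,T]}$. For a partition $\pi:0=s_0<s_1<\cdots<s_m=T$ avoiding the (at most countable) atoms of $\nu_f$ and $\nu_g$, put $f_\pi=\sum_{k=1}^{m}f(s_{k-1})\one_{[s_{k-1},s_k)}\in\mathscr{E}$ and define $g_\pi$ analogously. Two elementary facts are needed: (i) $\norm{\nu_{f_\pi}}_{\mathrm{TV}}$ stays bounded (by the total variation of $f^0$) as $|\pi|\to0$, and similarly for $g$; and (ii) $\nu_{f_\pi}\to\nu_f$ and $\nu_{g_\pi}\to\nu_g$ weakly on $[0,T]$. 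Combining (i) and (ii), the product measures satisfy $\nu_{f_\pi}\otimes\nu_{g_{\pi'}}\to\nu_f\otimes\nu_g$ weakly on $[0,T]^2$, so that, using the step-function identity and the boundedness and continuity of $R$,
\[
\innp{f_\pi,g_{\pi'}}_{\FH}=\int_{[0,T]^2}R(t,s)\,\nu_{f_\pi}(\dif t)\,\nu_{g_{\pi'}}(\dif s)\ \longrightarrow\ \int_{[0,T]^2}R(t,s)\,\nu_f(\dif t)\,\nu_g(\dif s)
\]
as $|\pi|,|\pi'|\to0$. Choosing $g=f$ gives $\norm{f_\pi-f_{\pi'}}_{\FH}^2\to0$, so $(f_\pi)$ is Cauchy in $\FH$; call its limit $\iota(f)$. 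The same convergence shows that $\iota(f)$ is independent of the chosen refining sequence of partitions, that $\iota$ coincides with the canonical embedding on $\mathscr{E}\subset\mathcal{V}_{[0,T]}$, and, letting $|\pi|,|\pi'|\to0$, that $\innp{\iota(f),\iota(g)}_{\FH}$ equals the right-hand side of \eqref{innp fg30}. This proves the identity for all $f,g\in\mathcal{V}_{[0,T]}$. Density of $\mathcal{V}_{[0,T]}$ in $\FH$ is then immediate, since $\mathscr{E}\subset\mathcal{V}_{[0,T]}$ and $\mathscr{E}$ is dense in $\FH$ by the very construction of $\FH$ as the completion of $(\mathscr{E},\innp{\cdot,\cdot}_{\FH})$.

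The delicate point is the limiting argument in the third paragraph: one has to prove the weak convergence $\nu_{f_\pi}\to\nu_f$ together with a uniform bound on $\norm{\nu_{f_\pi}}_{\mathrm{TV}}$, upgrade it to weak convergence of the product measures on $[0,T]^2$ so that the continuous kernel $R$ may be integrated against them (a Stone--Weierstrass argument reducing $C([0,T]^2)$ to finite sums of products works here), and handle the jumps of $f^0$ at the endpoints $0$ and $T$ produced by the truncation $f\mapsto f^0$ — here the mean-square continuity of $G$, which follows from continuity of $R$, is what makes isolated points $\FH$-negligible and keeps the bookkeeping consistent.
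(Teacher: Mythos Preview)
The paper does not actually prove this proposition: it is introduced with the sentence ``The following proposition comes from Theorem 2.3 of \cite{E12}'' and is quoted without any argument. Your write-up therefore goes well beyond what the paper does, supplying a genuine proof sketch where the paper simply appeals to the literature.

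Your approach --- verify \eqref{innp fg30} on indicators, extend bilinearly to $\mathscr{E}$, then approximate $f\in\mathcal{V}_{[0,T]}$ by step functions $f_\pi$ built from a partition $\pi$ and pass to the limit using continuity of $R$ and weak convergence $\nu_{f_\pi}\to\nu_f$ --- is the standard route and is essentially what one finds in Jolis. The step-function computation is correct, and the density claim is indeed trivial once one observes $\mathscr{E}\subset\mathcal{V}_{[0,T]}$. The one place that deserves care, which you rightly flag, is the weak convergence $\nu_{f_\pi}\to\nu_f$: with your definition $f_\pi=\sum_k f(s_{k-1})\one_{[s_{k-1},s_k)}$, the measure $\nu_{f_\pi}$ is a sum of point masses at the $s_k$ with weights $f(s_k)-f(s_{k-1})$ (plus endpoint contributions from the truncation $f\mapsto f^0$), and testing against a continuous $\varphi$ produces a Riemann--Stieltjes sum for $\int\varphi\,\dif\nu_f$. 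This converges because $\varphi$ is continuous and $f$ is BV, but note that continuity of $R$ on $[0,T]^2$ is genuinely used here and not merely its boundedness. With that caveat the argument is sound.
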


We will introduce some elements of the Malliavin calculus associated with $G$ (please refer to \cite{A18} for details). Let $C_b^{\infty}(R^{n})$ be the class of infinitely differentiable functions $f:R^n\rightarrow R$ such that $f$ and all its partial derivatives have polynomial growth order. We denote by $S$ the class of smooth and cylindrical random variables of the form
$$F=f( G(\varphi_1),\cdots,G(\varphi_n)),$$
where $n\geq1$, $\varphi_{i}\in \mathfrak{H},\, i=1,2,\cdots,n $ and $f\in C_b^{\infty}(R^n)$. \par
The Malliavin derivative operator D of a smooth and cylindrical random variable $F=f( G(\varphi_1),\cdots,G(\varphi_n))$ of the form is defined as the $\mathfrak{H}$-valued random variable
$$DF=\sum_{i=1}^{N} \frac{\partial f}{\partial x_{i}}\left(G\left(\varphi_{1}\right), \cdots, G\left(\varphi_{n}\right)\right) \varphi_{i}(t)\in L^2(\Omega,\mathfrak{H}),$$
In particular, $D_{s} G_{t}=I_{[0, t]}(s)$.\par
 $\mathbb{D}^{p}_{l,p}$ denotes the closure of the set of smooth random variables with respect to the norm
$$\|F\|_{l,p}^{p}=\mathbb{E}\left(F^{p}\right)+\mathbb{E}\sum^l_{k=1}\left[\|D F\|_{\mathfrak{H}\bigotimes k}^{2}\right].$$

Denote $\mathfrak{H}^{\otimes p}$ and $\mathfrak{H}^{\odot p}$ as the $p$th tensor product and the $p$th symmetric tensor product of the Hilbert space $\mathfrak{H}$. Let $\mathcal{H}_p$ be the $p$th Wiener chaos with respect to $G$. It is defined as the closed linear subspace of $L^2(\Omega)$ generated by the random variables $\{H_p(G(h)): h \in \mathfrak{H}, \ \|h\|_{\mathfrak{H}} = 1\}$, where $H_p$ is the $p$th Hermite polynomial defined by
\begin{equation}
H_p(x)=\frac{(-1)^p}{p!} e^{\frac{x^2}{2}} \frac{d^p}{dx^p} e^{-\frac{x^2}{2}}, \quad p \geq 1,
\end{equation}
and $H_{o}(x)=1$.\par
We have the identity $I_p(h^{\otimes p})=H_p(G(h))$ for any $h \in \mathfrak{H}$, where $I_p(\cdot)$ is the generalized Wiener-It\^o stochastic integral. Then the map $I_p$ provides a linear isometry between $\mathfrak{H}^{\odot p}$ (equipped with the norm $\frac{1}{\sqrt{p!}}\|\cdot\|_{\mathfrak{H}^{\otimes p}}$) and $\mathcal{H}_p$, namely, for any $f,g\in\mathfrak{H}^{\bigodot p}$ and $q\geq1$
$$\mathbb{E}\left[I_{q}(f) I_{q}(g)\right]=q!\langle f, g\rangle_{\mathfrak{H}^{\otimes q}},$$
to regulate that $\mathcal H_{o}=R$ and $I_{0}(x)=x$.\par
The $ Ornstein-Uhlenbeck $ operator L is defined by $LF=-\delta DF$. If $F=I_{q}(f_{q})$ is in the q-th Wiener chaos of $G$, $f_{q} \in \mathfrak{H}^{\odot q}$, then $LF=-qF$.\par
We will adopt two fundamental Theorems: Theorem 6.2.3 of \cite{A18} and Theorem 3.1 of \cite{E15}.\par
\section{Proof of Theorem \ref{D1}}\label{I2}
This section will apply the following well-known Lemma.
\begin{lem}\label{003}
\begin{description}
\item[(1)]Let the continuous function $f(x)\geq0$, if $\int^{\infty}_{0}f(x)\mathrm{d} x=0$, then ~$f(x)\equiv0$.
\item[(2)] Let the continuous binary function $f(x,y)\geq0$, if $\int^{\infty}_{0}\int^{\infty}_{0}f(x,y)\mathrm{d} x\mathrm{d }y=0$, then $f(x,y)\equiv0$.
\item[(3)] \cite[P56]{A15} Let $f$ be a measurable function of the measure space $(\Omega,\mathscr{F},\mu)$, if
$\int_{\Omega}f \mathrm{d} \mu=0$ and $f\geq0$ a.e, then $f=0$ a.e.
\end{description}
\end{lem}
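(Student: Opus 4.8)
All three assertions are elementary, and I would organise the argument so that (3) carries the measure-theoretic content while (1) and (2) are extracted from it by invoking continuity.

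Part (3) is the classical statement that a measurable function which is nonnegative a.e.\ and has vanishing integral is a.e.\ zero; since it is quoted from \cite[P56]{A15}, one may simply cite it. If a self-contained argument is wanted, the plan is: for every integer $k\ge1$ Chebyshev's inequality gives $\mu(\{f\ge 1/k\})\le k\int_\Omega f\,\mathrm{d}\mu=0$, and then $\{f>0\}=\bigcup_{k\ge1}\{f\ge 1/k\}$ is a countable union of $\mu$-null sets, so $\mu(\{f>0\})=0$, i.e.\ $f=0$ $\mu$-a.e.

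For (1) and (2) I would apply (3) with $(\Omega,\mathscr{F},\mu)$ taken to be $[0,\infty)$, respectively $[0,\infty)^2$, equipped with the Borel $\sigma$-algebra and (two-dimensional) Lebesgue measure. The hypotheses say $f$ is continuous, $f\ge0$, and $\int f=0$, so (3) yields $f=0$ Lebesgue-a.e. To upgrade this to $f\equiv0$ I would use continuity: the set $U=\{x:f(x)>0\}$ is the preimage of the open set $(0,\infty)$ under a continuous map, hence open; but $U$ is also Lebesgue-null, and a nonempty relatively open subset of $[0,\infty)$ (or of $[0,\infty)^2$) has strictly positive Lebesgue measure, so $U=\varnothing$, that is $f\equiv0$. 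Equivalently one may argue by contradiction: if $f(x_0)=\delta>0$ for some point $x_0$ of the domain, continuity produces a neighbourhood of $x_0$ — of positive Lebesgue measure even when $x_0$ lies on the boundary of the domain — on which $f>\delta/2$, and integrating over that neighbourhood contradicts $\int f=0$.

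There is no genuine obstacle here. The only point deserving a moment's care is that continuity is really used: for a merely measurable $f$ one can only conclude $f=0$ a.e.\ and not $f\equiv0$, and this is exactly where the remark that an open null set must be empty enters.
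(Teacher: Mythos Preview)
Your argument is correct and cleanly organised. The paper, however, does not actually prove this lemma: it introduces it as a ``well-known Lemma'' and supplies no proof at all, merely citing \cite[P56]{A15} for part~(3). So there is nothing to compare against; your proposal simply fills in what the paper leaves as a standard fact. The only minor comment is that your derivation of (1)--(2) from (3) is more uniform than strictly necessary---one could of course prove (1) and (2) directly by the contradiction/open-neighbourhood argument without ever invoking (3)---but routing through (3) is perfectly fine and arguably tidier.
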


\par
The following proves that the 9 types of zero-mean Gaussian processes in Theorem \ref{D1} satisfy hypothesis \ref{as01} and hypothesis   \ref{as02}.
\begin{enumerate}
\item \begin{proof}
When $t=s$, it can be seen clearly that hypothesis \ref{as01} is true. As shown below, hypothesis \ref{as02} is proved true.
According to the property 2.1 of \cite{d9}, we have
$$\operatorname{Cov}\left(X_{t^{2H}}^{K}, X_{s^{2H}}^{K}\right)=\int_{0}^{\infty}\left(1-e^{-\theta t^{2H}}\right)\left(1-e^{-\theta s^{2H}}\right) \theta^{-1-K} \mathrm{~\mathrm{d}} \theta.$$
Therefore, for any n-dimensional vector $\mathbf{a}=(a_{1},a_{2},\cdots, a_{n})^{\prime}$,
$$\mathbf{a}^{\prime}\mathbf{V}\mathbf{a}=\sum^n_{l=1}\sum^n_{j=1}a_{j}R(t_{j},t_{l})a_{l}=\int_{0}^{\infty}\lvert\sum^{n}_{j=1}\left(1-e^{-\theta t_{j}^{2H}}\right)a_{j}\rvert^{2} \theta^{-1-K} \mathrm{\mathrm{d}} \theta\geq 0.$$
When $\int_{0}^{\infty}\lvert\sum^{n}_{j=1}\left(1-e^{-\theta t_{j}^{2H}}\right)a_{j}\rvert^{2} \theta^{-1-K} \mathrm{\mathrm{d}} ~\theta=0$,  $\theta$ is taken as $1,2,\cdots,n-1$ in turn, and based on Lemma \ref{003}, we have\\
\begin{equation}\label{x12}
\begin{aligned}
&\sum^n_{j=1}(1-e^{-k t_{j}^{2H}})a_j=0(k=1,2,\cdots,n-1)\;\\
 \;&\sum^{n}_{j=1}\lim \limits_{\theta\rightarrow\infty}(1-e^{-\theta t_{j}^{2H}})a_j=0,
 \end{aligned}
\end{equation}
since
\begin{align*}
&\abs{\begin{array}{cccc}
1 & 1 & \cdots & 1 \\
1-e^{-t_{1}^{2 H}} & 1-e^{-t_{2}^{2H}} & \cdots & 1-e^{-t_{n}^{2 H}} \\
1-e^{-2 t_{1}^{2 H}} & 1-e^{-2 t_{2}^{2 H}} & \cdots & 1-e^{-2 t_{n}^{2 H}} \\
\cdots & \cdots & \cdots & \cdots \\
1-e^{-(n-1) t_{1}^{2 H}} & 1-e^{-(n-1) t_{2}^{2 H}} & \cdots & 1-e^{-(n-1) t_{n}^{2 H}}
\end{array}}
\\&=(-1)^{n-1}\prod_{1 \leq i<j \leq n}\left(e^{-t_{j}} - e^{-t_{i}}\right) \neq 0 .
\end{align*}
The only solution is zero for the homogeneous linear equation \eqref{x12}.
\end{proof}
\item \begin{proof}
When $s=t$, it is evident that hypothesis \ref{as01} is true.
As shown below, hypothesis \ref{as01}  is true. For any $x\in R$, there is
\begin{equation}\label{e1}
 \abs{x}^{2 H}=\frac{1}{c_{H}} \int_{0}^{\infty} \frac{1-e^{-u^{2} x^{2}}}{u^{1+2 H}} \mathrm{d} u,
\end{equation}
where $c_{H}=\int_{0}^{\infty}\left(1-e^{-u^{2}}\right) u^{-1-2 H}\mathrm{d} u<\infty$.\par
For any n-dimensional vector $\mathbf{a}=(a_{1},a_{2},\cdots,a_{n})^{\prime}$, and according to the property 1.6 on page 9 of \cite{A12}, it can be known that:
\begin{align*}
\mathbf{a}^{\prime}\mathbf{V}\mathbf{a}&=\sum^n_{l=1}\sum^n_{j=1}a_{k}R(t_{k},t_{l})a_{l}\\&=\sum_{k, l=1}^{n} \frac{1}{2}\left(t_{k}^{2 H}+t_{l}^{2 H}-\lvert t_{k}-t_{l}\rvert^{2 H}\right) a_{k} a_{l}
\\&=\frac{1}{2 c_{H}} \int_{0}^{\infty} \frac{\left(\sum_{k=1}^{n}\left(1-e^{-u^{2} t_{k}^{2}}\right) a_{k}\right)^{2}}{u^{1+2 H}} \mathrm{d} u
\\& \quad +\frac{1}{2 c_{H}} \sum_{d=1}^{\infty}\frac{2^{d}}{d!} \int_{0}^{\infty} \frac{\left(\sum_{k=1}^{n} t_{k}^{d} e^{-u^{2} t_{k}^{2}} a_{k}\right)^{2}}{u^{1-2 d+2 H}}\mathrm{d} u\\&\geq
\frac{1}{2 c_{H}} \int_{0}^{\infty} \frac{\sum_{d=1}^{n} \frac{2^{d}}{d!}\left(\sum_{k=1}^{n} t_{k}^{d} e^{-u^{2} t_{k}^{2}} a_{k}\right)^{2} }{u^{2 H+1-2 d}}\mathrm{d}u \geq 0.
\end{align*}

Based on Lemma \ref {003}, if the last equation is equal to 0, then there is
\begin{equation}
\sum_{d=1}^{n}\left(\sum_{k=1}^{n}t_{k}^{d} e^{-u^{2} t_{k}^{2}} a_{k}\right)^{2}=0,
\end{equation}
namely,
\begin{equation}\label{x15}
t_{1}^{k} e^{-u^2t_{1}^{2}} a_{1}+t_{2}^{k} e^{-u^2t_{2}^{2}}a_{2}+\cdots+t_{n}^{k} e^{-u^2 t_{n}^{2}} a_{n}=0\qquad(k=1,2\cdots,n),
\end{equation}
as$$
\abs{\begin{array}{cccc}
t_{1} e^{-u^{2} t_{1}^{2}} & t_{2} e^{-u^{2} t_{2}^{2}} & \cdots & t_{n} e^{-u^{2} t_{n}^{2}} \\
t_{1}^{2} e^{-u^{2} t^2_{1}} & t_{2}^{2} e^{-u_{2}^{2} t_{2}^{2}} & \cdots & t_{n}^{2} e^{-u^{2} t_{n}^{2}} \\
\vdots & \vdots & & \vdots \\
t_{1}^{n} e^{-u^{2} t} & t_{2}^{n} e^{-u_{2}^{2} t_{2}^{2}} & \cdots & t_{n}^{n} e^{-u^{2} t_{n}^{2}}
\end{array}}= \prod_{k=1}^n t_{k} e^{-u^{2} t_{k}^{2}} \prod_{1 \leq i<j \leq n}\left(t_{j}-t_{i}\right) \neq 0.
$$\par
Therefore, the only solution is zero for the homogeneous linear equation \eqref{x15}.
\end{proof}
\item \begin{proof}
When $t=s$, it can be seen clearly that hypothesis \ref{as01} is true.
As shown below, that hypothesis \ref{as02} is true.\par
For any n-dimensional vector $\mathbf{a}=(a_{1},a_{2},\cdots,a_{n})^{\prime}$, based on equation (\ref{e1}), there is
\begin{align*}
\mathbf{a}^{\prime}\mathbf{V}\mathbf{a}&=\sum_{i=1}^{n} \sum_{j=1}^{n} a_{i}\left(\lvert t_{i}+t_{j}\rvert^{2 H}-\lvert t_{i}-t_{j}\rvert^{2 H}\right) a_{j}\\&=
\sum_{i=1}^{n} \sum_{j=1}^{n} a_{i}a_{j} \frac{1}{c_H} \int_{0}^{\infty} \frac{1-e^{-u^{2}\left(t_{i}+t_{j}\right)^{2}}-1+e^{-u^{2}\left(t_{i}-t_{j}\right)^{2}}}{u^{2 H+1}} \mathrm{d}u\\&=
\frac{1}{c_{H}} \int_{0}^{\infty} \frac{\sum_{d =1}^{\infty} \frac{2^{2 d-1}}{(2 d-1)!} \cdot \sum^{n}_{i=1} \sum_{j=1}^{n} a_{i} t_{i}^{2 d-1} e ^{-u^2t_i^2}a_{j} t_{j}^{2 d-1}e ^{-u^2t_j^2}}{u^{2 H-4 d+3}}  \mathrm{d}u
\\&\geqslant \frac{1}{c_{H}} \int_{0}^{\infty} \sum_{d=1}^{n} \frac{2^{2d-1}}{(2d-1) !}\left(\sum_{i=1}^{n} a_{i} t_{i}^{2 d-1}e^{-u^2t_i^2}\right)^{2} \mathrm{d}u\geq 0 .
\end{align*}
Lemma \ref {003} means that the last equation is equal to 0, then there is
\begin{equation}
\sum_{d=1}^{n}\left(\sum_{k=1}^{n}t_{k}^{2d-1} e^{-u^{2} t_{k}^{2}} a_{k}\right)^{2}=0,
\end{equation}
namely,
\begin{equation}\label{3.6}
t_{1}^{2k-1} e^{-u^2t_{1}^{2}} a_{1}+t_{2}^{2k-1}e^{-u^2t_{2}^{2}}a_{2}+\cdots+t_{n}^{2k-1} e^{-u^2 t_{n}^{2}} a_{n}=0\,(k=1,2,\cdots,n),
\end{equation}
as
\begin{align*}
&\abs{\begin{array}{cccc}
t_{1} e^{-u^{2} t_{1}^{2}} & t_{2} e^{-u^{2} t_{2}^{2}} & \cdots & t_{n} e^{-u^{2} t_{n}^{2}} \\
t_{1}^{3} e^{-u^{2} t^2_{1}} & t_{2}^{3} e^{-u_{2}^{2} t_{2}^{3}} &\cdots & t_{n}^{3} e^{-u^{2} t_{n}^{2}} \\
\vdots & \vdots & & \vdots \\
t_{1}^{2n-1} e^{-u^{2} t} & t_{2}^{2n-1} e^{-u_{2}^{2} t_{2}^{2}} &\cdots & t_{n}^{2n-1} e^{-u^{2} t_{n}^{2}}
\end{array}}\\&=\prod_{i=1}^{n} t_{i} \prod_{i=1}^{n} e^{-u^{2} t_{i}^{2}} \prod_{1\leq i<j \leq n}\left(t_{j}^{2}-t_{i}^{2}\right)  \neq 0.
\end{align*}
Therefore, the homogeneous linear equation \eqref{3.6} has only solution of zero.
\end{proof}

\item \begin{proof}
When $s=t$ and $c\geq1$, then $R(t,s)=t^{2H}\leq ct^{2H}$, hypothesis \ref{as01} is true. As shown below, the strict positive definition of covariance matrix is proved true.\par
 For any n-dimensional vector $\mathbf{a}=(a_{1},a_{2},\cdots,a_{n})^{\prime}$, according to equation ~(\ref{e1}), there is
\begin{align*}
\mathbf{a}^{\prime}\mathbf{V}\mathbf{a}&=\frac{1}{2} \sum_{k=1}^{n} \sum_{l=1}^{n} a_{k} a_{l}\left(2 t_{k}^{2 H}+2 t_{l}^{2 H}-\left(t_{k}+t_{l}\right)^{2 H}-\lvert t_{k}-t_{l}\rvert^{2 H}\right)
\\&=
\frac{1}{c_{H}} \int_{0}^{\infty} \frac{\left(\sum_{k=1}^{n}\left(1-e^{-u^{2} t_{k}^{2}}\right) a_{k}\right)^{2}}{u^{2 H+1}}  \mathrm{d}u \\&\quad+\frac{1}{c_{H}} \int_{0}^{\infty} \frac{\sum_{d=1}^{\infty} \frac{2^{2 d}}{(2 d) !}\left(\sum_{k=1}^{n} e^{-u^{2} t_{k}^{2}} t_{k}^{2 d}a_{k}\right)^{2}}{u^{2 H+1-4 d}}  \mathrm{d}u
\\&
\geqslant \frac{1}{c_{H}} \int_{0}^{\infty}\frac{\sum_{d=1}^{n} \frac{2^{2d}}{(2d)!}\left(\sum_{k=1}^{n} e^{-u^{2} t_{k}^{2}} t_{k}^{2d}a_{k}\right)^{2} }{u^{2H+1-4d}}\mathrm{ d}u\geq 0,
\end{align*}
Based on Lemma~\ref {003}, if the last equation is equal to 0, then there is
\begin{equation}
\sum_{d=1}^{n}\left(\sum_{k=1}^{n}t_{k}^{2d} e^{-u^{2} t_{k}^{2}} a_{k}\right)^{2}=0 .
\end{equation}
Therefore,
\begin{equation}\label{3.8}
\sum_{k=1}^{n}t_{k}^{2d} e^{-u^{2} t_{k}^{2}} a_{k}=0 \quad(d=1,2,\cdots,n),
\end{equation}
consider that
$$\abs{\begin{array}{cccc}
t_{1}^2 e^{-u^{2} t_{1}^{2}} & t_{2}^2 e^{-u^{2} t_{2}^{2}} & \cdots & t_{n}^2 e^{-u^{2} t_{n}^{2}} \\
t_{1}^{4} e^{-u^{2} t^2_{1}} & t_{2}^{4} e^{-u^{2} t_{2}^{2}} & \cdots & t_{n}^{4} e^{-u^{2} t_{n}^{2}} \\
\vdots & \vdots & & \vdots \\
t_{1}^{2n} e^{-u^{2} t_{1}^2} & t_{2}^{2n} e^{-u^{2} t_{2}^{2}} & \cdots & t_{n}^{2n} e^{-u^{2} t_{n}^{2}}
\end{array}}=
 \prod_{k=1}^{n} t_{k}^{2} e^{-u^{2} t_{k}^{2}} \prod_{1 \leq i<j \leq n}\left(t^{2}_{j}-t^{2}_{i}\right) \neq 0.$$
Therefore, the solution is zero for the homogeneous linear equation \eqref{3.8}.
\end{proof}
\item \begin{proof}
When $t=s$, it can be seen clearly that hypothesis \ref{as01} is true. The following will prove that hypothesis \ref{as02} is true.\par
For any n-dimensional vector $\mathbf{a}=(a_{1},a_{2},\cdots,a_{n})^{\prime}$, base on Theorem 1.1 of \cite{A13}, there is
\begin{align*}
\mathbf{a}^{\prime}\mathbf{V}\mathbf{a}&=\sum_{j, k=1}^{n} a_{j} R\left(t_{j}, t_{k}\right) a_{l}\\&=2H \int_{0}^{\infty} \int_{0}^{\infty}\lvert\sum_{j=1}^{n} a_{j} \mathbf{1}_{\left\{u \leq t_{j}\right\}}\mathbf{1}_{\left\{r\leq\left(t_{j}+u\right)^{2H-1}\right\}}\rvert^{2} \mathrm{d }r \mathrm{d} u \geq 0.
\end{align*}
When
\begin{equation}
\sum_{j, k=1}^{n} a_{j} R\left(t_{j}, t_{k}\right) a_{l}=2H \int_{0}^{\infty} \int_{0}^{\infty}\lvert\sum_{j=1}^{n} a_{j} \mathbf{1}_{\left\{u \leq t_{j}\right\}} \mathbf{1}_{\left\{r\leq\left(t_{j}+u\right)^{2H-1}\right\}}\rvert^{2} \mathrm{ d}r \mathrm{d}u = 0,
\end{equation}

take the lower limit $0$, $(t_{n-1}+u)^{2H-1}$,\quad$(t_{n-2}+u)^{2H-1}, \cdots, (t_{2}+u)^{2H-1}$ of the integral variable r, and then from Lemma ~\ref{003}-(3), we have
\begin{equation}\label{3.2}
\left\{\begin{array}{l}
\sum_{j=1}^{n} a_{j}=0 \\
\sum_{j=1}^{n-1} a_{j}=0 \\
\vdots \\
\sum_{j=1}^{2} a_{j}=0  \\
a_{1}=0.
\end{array}\right.
\end{equation}
Therefore, the only solution is zero for the homogeneous linear equation \eqref{3.2}.
\end{proof}
\item\begin{proof}
When $t=s$, it can be seen clearly  that hypothesis \ref{as01} is true. As shown below, The hypothesis \ref{as02} is true.\\
For any n-dimensional vector  $\mathbf{a}=(a_{1},a_{2},\cdots,a_{n})^{\prime}$, according to Theorem 1.1 of \cite{A13}, there is
\begin{align*}
\mathbf{a}^{\prime}\mathbf{V}\mathbf{a}&=\sum_{j, k=1}^{n} a_{j} R (t_{j}, t_{l}) a_{l}\\&=c_{2H} \int_{0}^{\infty} \int_{0}^{\infty} y^{-2H} e^{-y r} \mathbb{E}\lvert\sum_{j=1}^{n} a_{j} \mathbf{1}_{\left\{r \leq t_{j}\right\}} e^{i y \eta_{t_{j}}}\rvert^{2}  \mathrm{d}r \mathrm{d}y \geq 0,
\end{align*}
where $c_{2H}=\left(\int_{0}^{\infty} \frac{1-e^{-y}}{y^{2H+1}}  \mathrm{y}\right)^{-1}=\frac{2H}{\Gamma(1-2H)}>0.$
when
\begin{equation}
c_{2H} \int_{0}^{\infty} \int_{0}^{\infty} y^{-2H} e^{-y r} E\lvert\sum_{j=1}^{n} a_{j} \mathbf{1}_{\left\{r \leq t_{j}\right\}} e^{i y \eta_{t_{j}}}\rvert^{2}  \mathrm{d}r \mathrm{d}y =0,
\end{equation}
the similar practice to equation \eqref{3.2} is adopted to take the lower limit $0$, $t_1, t_2, \cdots, t_{n-1}$ of the integral variable r, then based on Lemma \ref{003}-(3), there is
\begin{equation}\label{05}
\left\{\begin{array}{l}
\sum_{j=1}^{n} e^{i y \eta_{t_{j}}}=0 \\
\sum_{j=2}^{n} e^{i y \eta_{t_{j}}}=0  \\
\vdots \\
\sum_{j=n-1}^{n-2} e^{i y \eta_{t_{j}}}=0  \\
a_{n}e^{i y \eta_{t_{n}}}=0.
\end{array}\right.
\end{equation}
Therefore, the only solution is zero for homogeneous linear equation \eqref{05}.
\end{proof}
\item\begin{proof}
When $t=s$, it can be seen clearly that hypothesis \ref{as01} is true.\par
$R(s,t)$ can be decomposed into
\begin{align*}
R(s,t)=&(1-K)\frac{1}{2}[t^{2H}+s^{2H}-\abs{t-s}^{2H}]\\&+K\frac{2H}{2\Gamma(1-2H)}\cdot \frac{\Gamma(1-2H)}{2H}[t^{2 H}+s^{2 H}-(t+s)^{2H}].
\end{align*}
Judging from the conclusions of 1 and 2 in Theorem ~\ref{D1}, it can be known that the covariance matrix corresponding to 7 is obtainable through the introduction of two positive definite matrices.
\end{proof}
\par
\item
\begin{proof}
When $t=s$, it can be seen clearly that hypothesis \ref{as01} is true in (a) and (b). The following will prove that hypothesis \ref{as02} is true.\par
(a). When $H\in(0,1)$,$K\in(0,1)$ and $H K<1$, based on property 2.1 of \cite{A16} and property 1.5 of \cite{A18}, we have\par
\begin{align*}
&\sum_{i=1}^{n} \sum_{j=1}^{n} a_{i} a_{j} R\left(t_{i}, t_{j}\right)\\=&\frac{K}{\Gamma(1-K)} \int_{0}^{\infty} \sum_{i=1}^{n} \sum_{j=1}^{n} a_{i} e^{-x t_{i}^{2 H}} a_{j} e^{-x t_{j}^{2 H}}\left(e^{x\left(t_{i}^{2 H}+t_{j}^{2 H}-\lvert t_{j}-t_{i}\rvert^{2 H}\right)}-1\right) x^{-1-k} \mathrm{d}x
\\=&\int_{0}^{\infty} \sum_{i=1}^{n} \sum_{j=1}^{n} a_{i} e^{-x t_{i}^{2 H}}\left(t_{i}^{2 H}+t_{j}^{2 H}-\lvert t_{j}-t_{i}\rvert^{2 H}\right) a_{j} e^{-x t_{j}^{2 H}} x^{-K} \mathrm{d}x
\\ & +\int_{0}^{\infty} \sum_{i=1}^{n} \sum_{j=1}^{n} a_{i} e^{-x t_{i}^{2 H}} \frac{\left(t_{i}^{2 H}+t_{j}^{2 H}-\lvert t_{j}-t_{i}\rvert^{2 H}\right)^{2}}{2 !} a_{j} e^{-x t_{j}^{2 H}} x^{1-k} \mathrm{d}x +\cdots
\\ \geqslant & \int_{0}^{\infty} \sum_{i=1}^{n} \sum_{j=1}^{n} a_{i} e^{-x t_{i}^{2 H}}\left(t_{i}^{2 H}+t_{j}^{2 H}-\lvert t_{j}-t_{i}\rvert^{2H}\right) a_{j} e^{-x t_{j}^{2 H}} x^{-k}  \mathrm{d}x \geq0.
\end{align*}
According to Lemma \ref{003}, if the last equation is 0, then there is
\begin{equation}\label{008}
\sum_{i=1}^{n} \sum_{j=1}^{n} a_{i} e^{-x t_{i}^{2 H}}\left(t_{i}^{2 H}+t_{j}^{2 H}-\lvert t_{j}-t_{i}\rvert^{2H}\right) a_{j} e^{-x t_{j}^{2 H}}=0.
\end{equation}
According to the conclusion of 2 of Theorem \ref{D1}, if equation (\ref{008}) is equals to 0, then $a_{i} e^{-x t_{i}^{2 H}}=0(i=1,\cdots,n)$, Besides, since $e^{-x t_{i}^{2 H}}\neq 0$, so $a_{i}=0(i=1,\cdots,n)$. \par

(b). When $H\in(0,1)$,$K\in(1,2)$ and $HK<1$, $R(t,s)$ can be decomposed into:
$$
R(t,s)=\frac{1}{2^K}\left[\left(\left(s^{2 H}+t^{2H}\right)^{K}-t^{2 H K}-s^{2 HK}\right)+\left(t^{2H K}+s^{2H K}-\lvert t-s\rvert^{2 H K}\right)\right],
$$

Based on conclusions of 1 and 2 of Theorem \ref{D1}, it can be known that the covariance matrix corresponding to 8-(b) can be obtained by adding two strictly positive definite matrices.
\end{proof}
\item \begin{proof}
When t=s, it can be seen clearly  that hypothesis ~\ref{as01} is true in (a) and (b). As shown below, hypothesis \ref{as02} is proved true.\par
(a). $H \in (0, 1), \, K \in(1,2),H K<1$, $R(t,\,s)$ can be decomposed into
\begin{align*}
R(t, s)&=\left[\left(s^{2H}+t^{2 H K}\right)^{K}-t^{2 H K}-s^{2 H K}\right]\\&\quad+\left[t^{2 H K}+s^{H K}-\frac{1}{2}\left[(t+s)^{2 H K}+\vert t-s\vert^{2 H K}\right]\right].
\end{align*}
From the conclusions of 1 and 4 of Theorem \ref{D1}, we can know that the covariance matrix corresponding to 9-(a) can be obtained by adding two strictly positive definite matrices.\par
(b).$H\in(0,\frac{1}{2})$, $K\in(0,\frac{1}{2})$, $R(t,s)$ can be decomposed into
$$ R(s,t)=\frac{\left(s^{2 H}+t^{2 H}\right)^{K}-(t+s)^{2HK}}{2}+\frac{\left.\left(s^{2H}+t^{2 H}\right)^{K}-\vert t-s \vert^{2 H K}\right)}{2}.$$
According to 3.12 in \cite{A22} and the conclusion of 2 of Theorem \ref{D1}, the covariance matrix corresponding to 9-(b) can be obtained by introducing two strictly positive definite matrices.
\end{proof}
\end{enumerate}
\section{Proof of Theorem \ref{D4}}
Before proving Theorem, the following Lemmas are obtained according to the previous introduction and preliminary knowledge.
\begin{lem}\label{zengliang est}
 When hypothesis \ref{as02} is true, for any $\mathbf{t}^{\prime}\in\mathbb{R}^n$, if $\mathbf{G}_{\mathbf{t}}\sim N(0,\mathbf{V})$, the following conclusions can be drawn obtained in combination with \cite{A04}:\par
$(A_1)$.~$\mathbf{t}^{\prime} \mathbf{V}^{-1} \mathbf{G}_{\mathbf{t}}\sim N(\mathbf{0},\mathbf{t}^{\prime} \mathbf{V}^{-1}\mathbf{t})$,\, $\mathbb{E}\left(\left(\mathbf{t}^{\prime} \mathbf{V}^{-1} \mathbf{G}_{t}\right)^{4}\right)=3\left(\mathbf{t}^{\prime} \mathbf{V}^{-1} \mathbf{t}\right)^{2}$.\par
$(A_2)$.~$(\mathbf{G_t})^{'}\mathbf{ V}^{-1}\mathbf{G_t}\sim\chi^2_n$,\, $\mathbb{E}((\mathbf{G_t})^{\prime} \mathbf{V}^{-1}\mathbf{G_t})=n, \, \mathbb{E}((\mathbf{\mathbf{G_t}})^{'} \mathbf{V}^{-1}\mathbf{G_t})^2=n(n+2)$.\par
$(B_1)$.~$\mathbb{E}\left(\mathbf{G_{t}}^{\prime} \mathbf{V}^{-1} \mathbf{G_{t}}\left(\mathbf{t}^{\prime} \mathbf{V}^{-1}\mathbf{G_{t}}\right)\right)=0$.
\par
$(B_2)$.~$\mathbb{E}\left(\mathbf{G_{t}}^{\prime} \mathbf{V}^{-1} \mathbf{G_{t}}\left(\mathbf{t}^{\prime} \mathbf{V}^{-1}\mathbf{G_{t}}\right)^{2}\right)=
(n+2)\left(\mathbf{t}^{\prime} \mathbf{V}^{-1} \mathbf{t}\right).$
\end{lem}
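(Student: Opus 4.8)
The plan is to reduce all five identities to elementary moments of a standard Gaussian vector via a whitening transformation. Since $\mathbf{V}$ is symmetric and strictly positive definite by Hypothesis~\ref{as02}, the spectral theorem furnishes a symmetric positive definite square root $\mathbf{V}^{1/2}$, invertible with inverse $\mathbf{V}^{-1/2}$. First I would set $\mathbf{Z}=\mathbf{V}^{-1/2}\mathbf{G}_{\mathbf{t}}$, so that $\mathbf{Z}\sim N(\mathbf{0},\mathbf{I}_n)$ has i.i.d.\ standard normal coordinates $Z_1,\dots,Z_n$, and write $\mathbf{b}=\mathbf{V}^{-1/2}\mathbf{t}$. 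Then $\mathbf{t}'\mathbf{V}^{-1}\mathbf{G}_{\mathbf{t}}=\mathbf{b}'\mathbf{Z}$, $\mathbf{G}_{\mathbf{t}}'\mathbf{V}^{-1}\mathbf{G}_{\mathbf{t}}=\|\mathbf{Z}\|^2=\sum_{i=1}^n Z_i^2$, and $\|\mathbf{b}\|^2=\mathbf{t}'\mathbf{V}^{-1}\mathbf{t}$.

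For $(A_1)$ I would note that $\mathbf{b}'\mathbf{Z}$ is a linear combination of independent standard normals, hence centered Gaussian with variance $\|\mathbf{b}\|^2=\mathbf{t}'\mathbf{V}^{-1}\mathbf{t}$, and a $N(0,v)$ variable has fourth moment $3v^2$, which yields the stated value. For $(A_2)$, $\|\mathbf{Z}\|^2$ is a sum of $n$ independent squared standard normals, so it is $\chi_n^2$ with mean $n$ and variance $2n$; consequently $\mathbb{E}(\|\mathbf{Z}\|^4)=2n+n^2=n(n+2)$.

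For $(B_1)$ and $(B_2)$ I would expand in coordinates and apply $\mathbb{E} Z_i=\mathbb{E} Z_i^3=0$, $\mathbb{E} Z_i^2=1$, $\mathbb{E} Z_i^4=3$ together with independence. For $(B_1)$, $\mathbb{E}(\|\mathbf{Z}\|^2\,\mathbf{b}'\mathbf{Z})=\sum_{i,j} b_j\,\mathbb{E}(Z_i^2 Z_j)$, and every term $\mathbb{E}(Z_i^2 Z_j)$ vanishes (it equals $\mathbb{E} Z_i^3=0$ when $i=j$, and factors to $0$ when $i\neq j$), so the expectation is $0$. For $(B_2)$, $\mathbb{E}(\|\mathbf{Z}\|^2(\mathbf{b}'\mathbf{Z})^2)=\sum_{i,j,k} b_j b_k\,\mathbb{E}(Z_i^2 Z_j Z_k)$; the summand survives only when $j=k$, equalling $1$ if $j=k\neq i$ and $3$ if $j=k=i$, whence the sum is $\sum_i(\|\mathbf{b}\|^2-b_i^2)+3\|\mathbf{b}\|^2=(n-1)\|\mathbf{b}\|^2+3\|\mathbf{b}\|^2=(n+2)\,\mathbf{t}'\mathbf{V}^{-1}\mathbf{t}$.

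I do not expect a real obstacle here: this is a routine second- and fourth-moment computation for a Gaussian vector, and the only point needing a sentence of justification is the existence of $\mathbf{V}^{-1/2}$, which is immediate from Hypothesis~\ref{as02} and the spectral theorem. One could equivalently invoke the Isserlis--Wick formula directly on $\mathbf{G}_{\mathbf{t}}$ without whitening; the main care is simply the index-case bookkeeping in $(B_2)$.
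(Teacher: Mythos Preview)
Your proposal is correct and follows essentially the same route as the paper: the paper also whitens via $\mathbf{V}^{-1/2}$ to reduce to i.i.d.\ standard normals, then handles $(B_1)$ and $(B_2)$ by exactly the same coordinate expansion, while dispatching $(A_1)$ and $(A_2)$ by citing standard multivariate normal facts. Your write-up is in fact slightly more explicit on $(A_1)$--$(A_2)$ and on the index bookkeeping in $(B_2)$.
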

\begin{proof}
Given that $\mathbf{G}_t$ conforms to a multivariate normal distribution, conclusion $(A_1)$ is supported. The conclusion $(A_2)$ is reached according to Theorem \ref{D4} in Chapter 3 of \cite{A04}. As shown below, conclusions $(B_1)$ and $(B_2)$ are validated:\par
Let $\mathbf{V}^{-1/2}\mathbf{G_t}=[y_1,y_2,\cdots,y_n]^{\prime}$, then $\mathbf{V}^{-1/2}\mathbf{G_t}\sim N_n(\mathbf{0},I_n)$. Therefore, $y_1,y_2,\cdots,y_n$ are independent identically distributed and $y_{i}\sim N(0,1)$. Let $\mathbf{t^{\prime}}\mathbf{V}^{-1/2}=[a_1,a_2,\cdots,a_n]$, then for the conclusion $(B_1)$, there is
\begin{align*}
\mathbb{E}\left[\mathbf{G_{t}}^{\prime} \mathbf{V}^{-1} \mathbf{G}\left(\mathbf{t}^{\prime} \mathbf{V}^{-1} \mathbf{G_{t}}\right)\right]&=\mathbb{E}\left[\left(\sum_{i=1}^{n} y_{i}^{2}\right)\left(\sum_{i=1}^{n} a_{i} y_{i}\right)\right]\\
\\&=\sum_{i=1}^{n} a_{i} \mathbb{E} y_{i}^{3}+2\sum_{1\leq i<j \leq n} a_{i}\mathbb{E}\left(y_{i} y_{j}^{2}\right)
=0.
\end{align*}
Then, for the conclusion ($B_2$), there is
\begin{align*}
&\mathbb{E}\left(\mathbf{G_{t}}^{\prime} \mathbf{V}^{-1} \mathbf{G_{t}}\left(\mathbf{t}^{\prime} \mathbf{V}^{-1}\mathbf{G_{t}}\right)^{2}\right) \\ =&\mathbb{E} y_1^2(\sum_{i=1}^{n}a_{i}^{2}y_{i}^2)+\mathbb{E} y_2^2(\sum_{i=1}^{n}a_{i}^{2}y_{i}^2)+\cdots+\mathbb{E} y_n^2(\sum_{i=1}^{n} a_i^{2}y_i^2)\\& +2\sum^{n}_{j=1}(\sum_{1\leq k<i\leq n} a_i a_j\mathbb{E} (y_j^2 y_k y_i))
\\=&(n+2)\sum_{i=1}^n a_i^2
=(n+2)\left(\mathbf{t}^{\prime} \mathbf{V}^{-1} \mathbf{t}\right).
\end{align*}
\end{proof}
\begin{remark}
For the proofs of conclusions ($B_1$) and ($B_2$), the standard technique of completing the square of Theorem 3.2 of \cite{E9} is also applicable. The proof method used in the thesis is advantageous in conciseness.
\end{remark}
\begin{lem}
Hypothesis \ref{as01} and \ref{as02} are true, then the estimator $\hat{\mu}_n$ of $\mu$ is an unbiased estimate and convergence in $L^2$.
\end{lem}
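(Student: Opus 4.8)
The plan is to reduce everything to a single scalar Gaussian random variable and then quantify the growth of the information quantity $\mathbf{t}'\mathbf{V}^{-1}\mathbf{t}$. First I would insert $\mathbf{X}=\mu\mathbf{t}+\sigma\mathbf{G_t}$ into the definition of $\hat\mu_n$ and simplify, obtaining
\[
\hat\mu_n=\mu+\sigma\,\frac{\mathbf{t}'\mathbf{V}^{-1}\mathbf{G_t}}{\mathbf{t}'\mathbf{V}^{-1}\mathbf{t}}.
\]
This is well defined because Hypothesis \ref{as02} makes $\mathbf{V}$, hence $\mathbf{V}^{-1}$, strictly positive definite, so $\mathbf{t}'\mathbf{V}^{-1}\mathbf{t}>0$ for $\mathbf{t}\neq\mathbf{0}$. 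By part $(A_1)$ of Lemma \ref{zengliang est} we have $\mathbf{t}'\mathbf{V}^{-1}\mathbf{G_t}\sim N(0,\mathbf{t}'\mathbf{V}^{-1}\mathbf{t})$, so $\mathbb{E}(\mathbf{t}'\mathbf{V}^{-1}\mathbf{G_t})=0$ and therefore $\mathbb{E}(\hat\mu_n)=\mu$, i.e.\ $\hat\mu_n$ is unbiased.

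For the $L^2$ statement I would compute directly, again invoking $(A_1)$ of Lemma \ref{zengliang est},
\[
\mathbb{E}\big[(\hat\mu_n-\mu)^2\big]=\frac{\sigma^2}{(\mathbf{t}'\mathbf{V}^{-1}\mathbf{t})^2}\,\mathbb{E}\big[(\mathbf{t}'\mathbf{V}^{-1}\mathbf{G_t})^2\big]=\frac{\sigma^2}{\mathbf{t}'\mathbf{V}^{-1}\mathbf{t}}.
\]
Thus it suffices to prove $\mathbf{t}'\mathbf{V}^{-1}\mathbf{t}\to\infty$ as $n\to\infty$, and this is exactly where Hypothesis \ref{as01} is used. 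Writing $\mathbf{V}^{1/2}$ for the symmetric positive square root of $\mathbf{V}$, the Cauchy--Schwarz inequality applied to $\mathbf{b}'\mathbf{t}=(\mathbf{V}^{1/2}\mathbf{b})'(\mathbf{V}^{-1/2}\mathbf{t})$ gives, for every nonzero $\mathbf{b}\in\mathbb{R}^n$,
\[
\mathbf{t}'\mathbf{V}^{-1}\mathbf{t}\ \ge\ \frac{(\mathbf{b}'\mathbf{t})^2}{\mathbf{b}'\mathbf{V}\mathbf{b}}.
\]
Choosing $\mathbf{b}=e_n$, the $n$-th coordinate vector, yields $\mathbf{b}'\mathbf{t}=t_n$ and $\mathbf{b}'\mathbf{V}\mathbf{b}=R(t_n,t_n)=\mathbb{E}(G_{t_n}^2)\le c\,t_n^{\beta}$ by Hypothesis \ref{as01}, so
\[
\mathbf{t}'\mathbf{V}^{-1}\mathbf{t}\ \ge\ \frac{t_n^{2}}{c\,t_n^{\beta}}=\frac{t_n^{\,2-\beta}}{c}.
\]
Since $\beta<2$ and $t_n$ increases to $\infty$, the right-hand side diverges, hence $\mathbb{E}[(\hat\mu_n-\mu)^2]\le c\sigma^2\,t_n^{-(2-\beta)}\to 0$, which is the claimed $L^2$ convergence.

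The two moment identities are routine consequences of Lemma \ref{zengliang est}, so the only substantive step is the lower bound $\mathbf{t}'\mathbf{V}^{-1}\mathbf{t}\ge c^{-1}t_n^{2-\beta}$. Its point is that the variational (Cauchy--Schwarz) characterization of the quadratic form lets one bypass the unavailable explicit form of $\mathbf{V}^{-1}$ and trade it for the single, explicitly controlled quantity $R(t_n,t_n)$; the condition $\beta<2$ in Hypothesis \ref{as01} is precisely what makes $t_n^{2-\beta}\to\infty$. I expect the only point needing care is to have $t_n\to\infty$ at one's disposal in this lemma; this holds since the observation times are strictly increasing and, in the regime of Theorem \ref{D4}, satisfy $c_1n^{\alpha}\le t_n\le c_2 n^{\alpha}$, but if stated in full generality it should be included as a standing assumption.
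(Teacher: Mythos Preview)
Your argument is correct, and the first half (the representation of $\hat\mu_n-\mu$, unbiasedness via $(A_1)$, and the identity $\mathbb{E}[(\hat\mu_n-\mu)^2]=\sigma^2/(\mathbf{t}'\mathbf{V}^{-1}\mathbf{t})$) is exactly how the paper proceeds. The difference lies in how the divergence of $\mathbf{t}'\mathbf{V}^{-1}\mathbf{t}$ is established.

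The paper does not single out the coordinate $e_n$ via Cauchy--Schwarz. Instead it uses a spectral bound: if $\lambda_n$ denotes the largest eigenvalue of $\mathbf{V}$, then $\mathbf{t}'\mathbf{V}^{-1}\mathbf{t}\ge \mathbf{t}'\mathbf{t}/\lambda_n$, and $\lambda_n$ is dominated by the trace $\sum_i R(t_i,t_i)\le c\sum_i t_i^{\beta}$ by Hypothesis~\ref{as01}. This yields $\mathbf{t}'\mathbf{V}^{-1}\mathbf{t}\ge c^{-1}\sum_i t_i^{2}/\sum_i t_i^{\beta}$, and a Stolz argument (using $t_n^{\beta}/t_n^{2}\to 0$) finishes. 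Your Cauchy--Schwarz choice $\mathbf{b}=e_n$ is more elementary and gives a cleaner explicit rate $\mathbf{t}'\mathbf{V}^{-1}\mathbf{t}\ge c^{-1}t_n^{2-\beta}$, bypassing both the eigenvalue/trace step and Stolz; this bound is in fact of the same order as what the paper eventually extracts from its ratio when proving Theorem~\ref{D4}(1). Your closing caveat is also apt: both your argument and the paper's Stolz step tacitly require $t_n\to\infty$, which is not part of Hypotheses~\ref{as01}--\ref{as02} per se but is guaranteed under the standing regime $c_1 n^{\alpha}\le t_n\le c_2 n^{\alpha}$.
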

\begin{proof}
By (2) and (4), we have
\begin{equation}\label{x25}
\hat{\mu}_n=\mu+\sigma \frac{\mathbf{t}^{\prime}\mathbf{V}^{-1} \mathbf{G}_{\mathbf{t}}}{\mathbf{t}^{\prime}\mathbf{V}^{-1} \mathbf{t}}.
\end{equation}
According to ($A_1$), it can be known that $\frac{\mathbf{t}^{\prime}\mathbf{V}^{-1} \mathbf{G}_{\mathbf{t}}}{\mathbf{t}^{\prime} V^{-1} \mathbf{t}}\sim N(0,\frac{1}{\mathbf{t}^{\prime} \mathbf{V}^{-1} \mathbf{t}})$. Therefore, $\mathbb{E}\hat{\mu}_n=\mu$, which means $\hat{\mu}_n$  is an unbiased estimate of $\mu$ .Also,\\
\begin{align*}
\mathbb{E}\left[(\hat{\mu}_n-\mu)^{2}\right]
=\frac{\sigma^{2}}{\mathbf{t}^{\prime}\mathbf{V}^{-1} \mathbf{t}}.
\end{align*}

If the hypothesis \ref{as02} is true, it can be known that $\mathbf{V}$ is a strictly positive definite matrix. In this circumstance, all eigenvalues $\lambda>0$ of $\mathbf{V}$.
Let $\lambda_n$ be the maximum eigenvalue of $\mathbf{V}$, then it can be known that $\frac{1}{\lambda_n}$ is the minimum eigenvalue of $\mathbf{V}^{-1}$ according to
$$\mathbf{V}\alpha=\lambda\alpha \Longrightarrow \mathbf{V}^{-1}\alpha=\frac{\alpha}{\lambda}.$$

If the hypothesis \ref {as01} is true, according to 1.6.3 in \cite {A04}, there is\\
$$\mathbf{t}^{\prime}\mathbf{V}^{-1}\mathbf{t} \geqslant \frac{\mathbf{t}^{\prime}\mathbf {t}}{\lambda_{n}}=\frac{\sum^{n}_{i=1}t_{i}^{2}}{\lambda_{n}} \geqslant \frac{\sum^{n}_{i=1}t_{i}^{2}}{\sum_{i=1}^{n}t_{i}^{\beta}}.$$
Since $\underset{{n\rightarrow\infty}}{\lim}\frac{t_n^{\beta}}{t_n^2}=0$, therefore, according to $stolz$ equation, we have
$$\underset{{n\rightarrow\infty}}{\lim}\frac{\sum^{n}_{i=1}t_{i}^{\beta}}{\sum_{i=1}^{n}t_i^{2}}=0.$$
so \\
\begin{equation}\label{w2}
\mathbb{E}(\hat{\mu}_n-\mu)^{2}=\frac{\sigma^{2}}{\mathbf{t}^{\prime} \mathbf{V}^{-1} \mathbf{t}} \leqslant\sigma^2\frac{\sum^{n}_{i=1}t_{i}^{\beta}}{\sum_{i=1}^{n}t_i^{2}} \rightarrow 0\quad as\quad n \rightarrow \infty.
\end{equation}
\end{proof}
\begin{lem}\label{e32}
If the hypothesis \ref{as02} is true,  $\hat{\sigma}_n^2$ is an asymptotic unbiased estimator and $L^2$-consistency.
\end{lem}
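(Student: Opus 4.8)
The plan is to reduce $\hat\sigma_n^2$ to a single quadratic functional of the Gaussian vector $\mathbf{G_t}$ and then read off its moments from Lemma \ref{zengliang est}. First I would substitute the representation $\mathbf{X}=\mu\mathbf{t}+\sigma\mathbf{G_t}$ into the defining formula for $\hat\sigma_n^2$ and expand the three quadratic forms $\mathbf{X}^{\prime}\mathbf{V}^{-1}\mathbf{X}$, $\mathbf{t}^{\prime}\mathbf{V}^{-1}\mathbf{X}$ and $(\mathbf{t}^{\prime}\mathbf{V}^{-1}\mathbf{X})^2$; the terms carrying $\mu^2$ and the cross terms in $\mu\sigma$ cancel, leaving
\begin{equation*}
\hat\sigma_n^2=\frac{\sigma^2}{n}W_n,\qquad W_n:=\mathbf{G_t}^{\prime}\mathbf{V}^{-1}\mathbf{G_t}-\frac{(\mathbf{t}^{\prime}\mathbf{V}^{-1}\mathbf{G_t})^2}{\mathbf{t}^{\prime}\mathbf{V}^{-1}\mathbf{t}}.
\end{equation*}
Hypothesis \ref{as02} enters only here, to guarantee that $\mathbf{V}^{-1}$ exists and that $\mathbf{t}^{\prime}\mathbf{V}^{-1}\mathbf{t}>0$, so that this expression is meaningful; no growth assumption on $t_n$ (i.e.\ no use of hypothesis \ref{as01}) is needed for this lemma.

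For asymptotic unbiasedness I would take expectations in the displayed identity. By $(A_2)$ of Lemma \ref{zengliang est}, $\mathbb{E}(\mathbf{G_t}^{\prime}\mathbf{V}^{-1}\mathbf{G_t})=n$, and by $(A_1)$, $\mathbf{t}^{\prime}\mathbf{V}^{-1}\mathbf{G_t}\sim N(0,\mathbf{t}^{\prime}\mathbf{V}^{-1}\mathbf{t})$, so $\mathbb{E}(\mathbf{t}^{\prime}\mathbf{V}^{-1}\mathbf{G_t})^2=\mathbf{t}^{\prime}\mathbf{V}^{-1}\mathbf{t}$. Hence $\mathbb{E}W_n=n-1$ and $\mathbb{E}\hat\sigma_n^2=\frac{n-1}{n}\sigma^2\to\sigma^2$ as $n\to\infty$.

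For $L^2$-consistency I would expand $\mathbb{E}W_n^2$ into three pieces, namely $\mathbb{E}(\mathbf{G_t}^{\prime}\mathbf{V}^{-1}\mathbf{G_t})^2$, $\mathbb{E}[(\mathbf{G_t}^{\prime}\mathbf{V}^{-1}\mathbf{G_t})(\mathbf{t}^{\prime}\mathbf{V}^{-1}\mathbf{G_t})^2]$ and $\mathbb{E}(\mathbf{t}^{\prime}\mathbf{V}^{-1}\mathbf{G_t})^4$, which equal $n(n+2)$, $(n+2)\,\mathbf{t}^{\prime}\mathbf{V}^{-1}\mathbf{t}$ and $3(\mathbf{t}^{\prime}\mathbf{V}^{-1}\mathbf{t})^2$ by $(A_2)$, $(B_2)$ and $(A_1)$ respectively. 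This gives $\mathbb{E}W_n^2=n(n+2)-2(n+2)+3=n^2-1$, and therefore
\begin{equation*}
\mathbb{E}\big(\hat\sigma_n^2-\sigma^2\big)^2=\frac{\sigma^4}{n^2}\,\mathbb{E}W_n^2-\frac{2\sigma^4}{n}\,\mathbb{E}W_n+\sigma^4=\frac{(2n-1)\sigma^4}{n^2}\longrightarrow 0,
\end{equation*}
so $\hat\sigma_n^2\to\sigma^2$ in $L^2$, which completes the proof.

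I do not anticipate a genuine obstacle; the only point requiring care is the bookkeeping of the cancellation in the first step (checking that the $\mu^2$ and $\mu\sigma$ contributions really vanish). As a shortcut one may avoid the moment computation entirely: writing $\mathbf{y}=\mathbf{V}^{-1/2}\mathbf{G_t}\sim N_n(\mathbf{0},I_n)$, the functional $W_n$ is the squared length of the orthogonal projection of $\mathbf{y}$ onto the hyperplane perpendicular to $\mathbf{V}^{-1/2}\mathbf{t}$, hence $W_n\sim\chi_{n-1}^2$, so $\mathbb{E}W_n=n-1$ and $\mathrm{Var}(W_n)=2(n-1)$ follow at once; this also accounts for the constant $\tfrac{\sqrt{2n-1}}{n}$ that appears in the Berry--Ess\'een bound of Theorem \ref{D4}.
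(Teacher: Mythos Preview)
Your proposal is correct and follows essentially the same route as the paper: reduce $\hat\sigma_n^2$ to $\frac{\sigma^2}{n}W_n$ with $W_n=\mathbf{G_t}'\mathbf{V}^{-1}\mathbf{G_t}-(\mathbf{t}'\mathbf{V}^{-1}\mathbf{G_t})^2/(\mathbf{t}'\mathbf{V}^{-1}\mathbf{t})$, then compute $\mathbb{E}W_n=n-1$ and $\mathbb{E}W_n^2=n^2-1$ via the moment identities of Lemma~\ref{zengliang est} to obtain $\mathbb{E}(\hat\sigma_n^2-\sigma^2)^2=\frac{2n-1}{n^2}\sigma^4$. Your closing remark that $W_n\sim\chi_{n-1}^2$ (via projection of $\mathbf{V}^{-1/2}\mathbf{G_t}$ onto the hyperplane orthogonal to $\mathbf{V}^{-1/2}\mathbf{t}$) is a clean shortcut the paper does not use, and it indeed recovers the same constants without expanding the mixed moments.
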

\begin{proof}
By (2) and (5), we have
\begin{equation}
\hat{\sigma}^{2}_n=\frac{\sigma^{2}}{n}\left( \mathbf{G_{t}}^ { \prime } \mathbf{V} ^ {-1} \mathbf{G_t}-\left(\frac{\left(\mathbf{t}^{\prime} \mathbf{V}^{-1} \mathbf{G_{t}}\right)^{2}}{\mathbf{t}^{\prime}  \mathbf{V}^{-1} \mathbf{t}}\right)\right).\\
\end{equation}
According to conclusion $(A_2)$ and conclusion $(B_2)$, there is
\begin{equation}
\mathbb{E}\hat{\sigma}^{2}_n
=\frac{(n-1) \sigma^{2}}{n}\rightarrow\sigma^2\quad n\rightarrow \infty.
\end{equation}
According to conclusion ($A_2$) and conclusion ($B_2$) , there is
\begin{align*}
&\mathbb{E}[\left(\hat{\sigma}^{2}_n\right)^{2}] \\=&\frac{\sigma^{4}}{n^{2}}\left(\mathbb{E}\left((\mathbf{G}_{t}^{\prime} \mathbf{V}^{-1}\left(\mathbf{G}_{t}\right))^{2}-\frac{2 \mathbb{E}\left(\mathbf{G}_{t}^{\prime} \mathbf{V}^{-1} \mathbf{G}_{t}\left(\mathbf{t}^{\prime}\mathbf{V}^{-1} \mathbf{G}_{t}\right)^{2}\right)}{\left(\mathbf{t}^{\prime} \mathbf{V}^{-1} \mathbf{t}\right)}+\frac{\mathbf{t}^{\prime}\mathbf{V}^{-1} \mathbb{E}\left(\mathbf{G}_{t} \mathbf{G}_{t}^{\prime}\right) \mathbf{V}^{-1} \mathbf{t}}{\left(\mathbf{t}^{\prime}\mathbf{V}^{-1} \mathbf{t}\right)^{2}}\right) \right)\\
=&\frac{\sigma^{4}}{n}(n(n+2)-2(n+2)+3)
=\frac{\sigma^{4}}{n}\left(n^{2}-1\right),
\end{align*}
namely,
\begin{equation}
\mathbb{E}[\left(\hat{\sigma}^{2}_n\right)^{2}]=\frac{\sigma^{4}}{n}\left(n^{2}-1\right).
\end{equation}
Therefore
\begin{equation}\label{x30}
\mathbb{E}\left(\lvert\hat{\sigma}^{2}_n-\sigma^{2}\rvert^{2}\right)=\mathbb{E}(\hat{\sigma}^{2}_n)^2-2\sigma^2 \mathbb{E}\hat{\sigma}^{2}_n+\sigma^{4}=\frac{2 n-1}{n^{2}} \sigma^{4}\rightarrow 0 \quad as\quad n\rightarrow \infty.
\end{equation}
\end{proof}

\noindent {\bf 4. Proof of Theorem \ref{D4}.1}
To prove that $\hat{\mu}_n$ converges almost everywhere, through the Borel-Cantelli Lemma,  it is sufficient to prove
\begin{equation}\label{x31}
\sum_{n=1}^{\infty} \mathbf{P}\left(\lvert\hat{\mu}_n-\mu\rvert>\epsilon\right)<\infty.
\end{equation}
According to equation (\ref{w2}), there is
\begin{equation}
\mathbb{E}(\hat{\mu}_n-\mu)^{2}\leq\sigma^2\frac{\sum_{i=1}^{n}t_{i}^{\beta}}{\sum_{i=1}^{n}t_{i}^{2}}\leq\frac{2\sigma^{2}t_{n}^{\beta}}{t^{2}_{n/2}}=\frac{2\sigma^{2}c_{2}^{\beta}n^{\alpha \beta}}{c_{1}^{2}2^{2\alpha}n^{2\alpha}}\leq c n^{\alpha(\beta-2)}.
\end{equation}
Pick $\epsilon>0$,  Chebyshev inequality is used.
\begin{align*}
\mathbf{P}\left(\lvert\hat{\mu}_n-\mu\rvert>\epsilon\right) &\leq \epsilon^{-q} \mathbf{E}\left(\lvert\hat{\mu}_n-\mu\rvert^{q}\right) \\
&\leq c\epsilon^{-q}\left(\mathbf{E}\left(\lvert\hat{\mu}_n-\mu\rvert^{2}\right)\right)^{q / 2}\\
&\leq c n^{\frac{q\alpha(\beta-2)}{2}}.
\end{align*}
When $q$ is large enough to the extent that $\frac{(\beta-2)\alpha q}{2}<-1$, the convergence of equation \eqref{x31}  is achievable. It can be verified that when the growth of $t_n$ is accelerated, the same conclusion can be drawn. For example, the growth of $t_n$ is in the range of $(c_1a^n,c_2a^n)$, where $a>1$.\par
Similarly, it can be proved that $\hat{\sigma}^2_n$ converges almost everywhere. Based on Lemma \ref{e32}, there is
\begin{align*}
P\left(\lvert\hat{\sigma}^{2}_n-\sigma^{2}\rvert>\epsilon\right) & \leq \epsilon^{-q} \mathbb{E}\left(\lvert\hat{\sigma}^{2}_n-\sigma^{2}\rvert^{q}\right) \\ & \leq c \epsilon^{-q}\left(\mathbb{E}\left(\rvert\hat{\sigma}^{2}_n-\sigma^{2}\rvert^{2}\right)\right)^{q / 2}\\& 
\leq c n^{-\frac{q}{2}}.
\end{align*}
When q is large enough to the extent that $\frac{q}{2}<1$, according to the Borel-Cantelli Lemma, equation (7) is true.\\
\noindent {\bf 4. Proof of Theorem \ref{D4}.2}
From equation \eqref{x25} and the conclusion ($A_1$), it can be known that $Y_n \stackrel{l a w}{\longrightarrow}N(0,1)$. According to equation \eqref{x30}, there is
\begin{align*}
\mathbb{E}((Q_n)^2)=\frac{n}{2\sigma^2}\mathbb{E}(\hat{\sigma}_n^2-\sigma^2)^2=\frac{n}{2\sigma^2}\frac{2n-1}{n^2}\sigma^4\rightarrow 1\quad as\quad n\rightarrow \infty.
\end{align*}
since $Q_n\in\mathcal{H}_2$, which is similar to the proof method of Lemma 3.1 in sub-fractional Brownian motion in \cite {E10}, it can be known that $\|D Q_{n}\|_{\mathfrak{H}}^{2}=\frac{2\hat{\sigma}_n^2}{\sigma^2}$ . Then according to equation \eqref{x30}, there is
\begin{align*}
\mathbb{E}(\frac{2\hat{\sigma}^2_n}{\sigma^2}-2)^2=\frac{4}{\sigma^4}\mathbb{E}(\hat{\sigma}^2_n-\sigma^2)^2=\frac{4}{\sigma^4}\frac{2n-1}{n^2}\sigma^4\rightarrow 0\quad as\quad n\rightarrow\infty,
\end{align*}
According to the Theorem 2.1 of the \cite{E14} (fourth order moment Theorem), there is
$$Q_n=\frac{1}{\sigma^{2}} \sqrt{\frac{n}{2}}(\hat{\sigma}^{2}_n-\sigma^{2})\stackrel{l a w}{\longrightarrow} N(0,1).$$
Base on conclusions ($A_1$) and $(B_1)$, there is
\begin{align*}
 &\mathbb{E}(Q_{n} Y_{n})\\=&\frac{\sqrt{\mathbf{t}^{\prime} \mathbf{V}^{-1} \mathbf{t}}}{\sigma^{4}} \sqrt{\frac{n}{2}} \mathbb{E}\left(\hat{\mu}_{n}-\mu\right)\left(\hat{\sigma}_{n}^{2}-\sigma^{2}\right)\\
=&\frac{\sqrt{\mathbf{t}^{\prime}\mathbf{ V}^{-1} \mathbf{t}}}{\sigma} \sqrt{\frac{n}{2}}\mathbb{E} \left(\frac{\mathbf{t}^{\prime}\mathbf{ V}^{-1} \mathbf{G_t}}{\mathbf{t}^{\prime} \mathbf{V}^{-1}\mathbf{ t}}\left(\frac{1}{n}\left(\mathbf{G_{t}}^{\prime} \mathbf{V}^{-1} \mathbf{G_{t}}-\frac{\left(\mathbf{t}^{\prime} \mathbf{V}^{-1} \mathbf{G_{t}} t\right)^2}{\mathbf{t}^{\prime} \mathbf{V}^{-1} \mathbf{t}}\right)-1\right)\right)\\
=&\frac{\sqrt{\mathbf{t}^{-1} \mathbf{V}^{-1} \mathbf{t}}}{\sigma} \sqrt{\frac{n}{2}}\left[\frac{\mathbb{E}\left(\mathbf{t}^{\prime} \mathbf{V}^{-1}\mathbf{ \mathbf{G_{t}}} \mathbf{G_{t}}^{\prime} \mathbf{V}^{-1} \mathbf{G_{t}}\right)}{n \mathbf{t}^{\prime}\mathbf{ V}^{-1}\mathbf{ \mathbf{t}}}-\frac{\mathbb{E}\left(\mathbf{t}^{\prime} \mathbf{V}^{-1} \mathbf{G_{t}}\right)^{3}}{n\left(\mathbf{t}^{\prime} \mathbf{V}^{-1} \mathbf{t}\right)^{2}}-\frac{\mathbb{E}\left(\mathbf{t}^{\prime} \mathbf{V}^{-1} \mathbf{\mathbf{G_{t}}}\right)}{\mathbf{t}^{\prime} \mathbf{V}^{-1} \mathbf{t}}\right]\\
=&0 .
\end{align*}
Finally, Theorem \ref{D4}.2  is verified as true by Theorem 6.2.3 of \cite{A18}.\\
\noindent {\bf 4. Proof of Theorem \ref{D4}.3}
Based on Theorem 3.1 of \cite {E15}, it is sufficient to prove:\par
\begin{description}
\item[(i)]$\varphi(n):=\sqrt{\mathbb{E}\left[\left(1-\left\langle D \bar{Q}_{n},-D L^{-1} \bar{Q}_{n}\right\rangle_{\mathfrak{H}}\right)^{2}\right]}\rightarrow 0 \quad as \quad n\rightarrow \infty$.

\item[(ii)]$\left(\bar{Q}_{n}, \frac{1-\left\langle D\bar {Q}_{n},-D L^{-1} \bar{Q}_{n}\right\rangle_ \mathfrak{H}}{\varphi(n)}\right)\rightarrow (N_1,N_2)\quad as \quad n\rightarrow \infty,$
\end{description}

where ($N_1$, $N_1$) conforms to the bivariate standard normal distribution and the correlation coefficient is ~$\rho$.\par
Firstly, for part (i), according to equation \eqref{x30}, there is
$$\mathbb{E}\left[1-\left\langle D \bar{Q}_{n},-D L^{-1} \bar{Q}_{n}\right\rangle_{\mathfrak{H}}\right]^{2}=\mathbb{E}[1-\frac{1}{2}|| D Q_{n}||_{\mathfrak{H}}^{2}]=\frac{\mathbb{E}\left({\sigma^{2}}-\hat{\sigma}^{2}_n\right)^{2}}{\sigma^{4}}=\frac{2 n-1}{n^{2}},$$
Therefore,
$$\varphi(n)=\frac{\sqrt{2 n-1}}{n} \rightarrow 0 \quad as \quad n \rightarrow \infty.$$
For part (ii), since
$$var(\bar{Q}_{n})=var(Q_{n}) \rightarrow 1 \quad as \quad n\rightarrow\infty,$$
\begin{align*}
&var\left(\frac{1-\left\langle D \bar{Q}_{n},-D L^{-1}\bar{Q}_{n}\right\rangle}{\varphi(n)}\right)=\frac{var (\hat{\sigma}^{2}_n)}{\sigma^{2} \varphi^{2}(n)}=\frac{2 n-2}{2 n-1} \rightarrow 1 \quad as \quad n \rightarrow \infty.
\end{align*}
Therefore,
\begin{align*}
\rho&=\lim _{n\rightarrow\infty}cov(\bar{{Q}}_{n},\frac{1-\langle D\bar{Q}_{n},-D L^{-1} \bar{Q}_{n}\rangle}{\varphi(n)})\\&=\lim_{n\rightarrow\infty} \frac{\mathbb{E}(\bar{{Q}}_{n}(\sigma^{2}-\hat{\sigma}^{2}_n))}{ \varphi(n)\sigma^2}\\&= \lim_{n\rightarrow\infty}-\sqrt{\frac{n}{2}}[\mathbb{E}({\sigma^{2}}-\hat{\sigma}^{2}_n)^{2}-(\mathbb{E}({\sigma^{2}}-\hat{\sigma}^{2}_n))^2]\\&=
\lim_{n\rightarrow\infty}-\frac{\sqrt{n}(n-1)}{n\sqrt{n-\frac{1}{2}}}=-1.
\end{align*}

\bigskip

\bibliography{sn-bibliography}

\begin{thebibliography}{99}
\bibitem{B1} Norros I. A storage model with self-similar input[J]. Queueing systems, 1994, 16(3): 387-396.
\bibitem{D1} Duncan T E, Yan Y, Yan P. Exact asymptotics for a queue with fractional Brownian input and applications in ATM networks[J]. Journal of applied probability, 2001, 38(4): 932-945.
\bibitem{E6} Hu Y, Lee C. Drift parameter estimation for a reflected fractional Brownian motion based on its local time[J]. Journal of Applied Probability, 2013, 50(2): 592-597.
\bibitem{E3} Xiao W, Zhang W, Zhang X. Parameter identification for drift fractional brownian motions with application to the chinese stock markets[J]. Communications in Statistics-Simulation and Computation, 2015, 44(8): 2117-2136.
\bibitem{E7} Istas J, Lang G. Quadratic variations and estimation of the local H\"{o}lder index of a Gaussian process[J]. Annales de L'Institut Henri Poincare Section (B) Probability and Statistics, 1997, 33(4): 407-436.
\bibitem{E8} Kubilius K, Mishura Y. The rate of convergence of Hurst index estimate for the stochastic differential equation[J]. Stochastic processes and their applications, 2012, 122(11): 3718-3739.
\bibitem{E19} Stibrek D. Statistical inference on the drift parameter in fractional Brownian motion with a deterministic drift[J]. Communications in Statistics-Theory and Methods, 2017, 46(2): 892-905.
\bibitem{E5} Xiao W, Zhang X, Zuo Y. Least squares estimation for the drift parameters in the sub-fractional Vasicek processes[J]. Journal of Statistical Planning and Inference, 2018, 197: 141-155.
\bibitem{E9} Yaozhong H, David N, Weilin X, et al. Exact maximum likelihood estimator for drift fractional Brownian motion at discrete observation[J]. Acta Mathematica Scientia, 2011, 31(5): 1851-1859.
\bibitem{E10} Kuang N, Liu B. Parameter estimations for the sub-fractional Brownian motion with drift at discrete observation[J]. Brazilian Journal of Probability and Statistics, 2015, 29(4): 778-789.
\bibitem{d9} Bertin K, Torres S, Tudor C A. Maximum-likelihood estimators and random walks in long memory models[J]. Statistics, 2011, 45(4): 361-374.
\bibitem{A10} Lei P, Nualart D. A decomposition of the bifractional Brownian motion and some applications[J]. Statistics and probability letters, 2009, 79(5): 619-624.
\bibitem{A11} Bardina X, Bascompte D. A decomposition and weak approximation of the sub-fractional Brownian motion[J]. Preprint, Departament de Matematiques, UAB, 2009.
\bibitem{A17} Durieu O, Wang Y. From infinite urn schemes to decompositions of self-similar Gaussian processes[J]. Electronic Journal of Probability, 2016, 21: 1-23.
\bibitem{A13} TAalarczyk A. Bifractional Brownian motion for $H>1$ and $2HK\leq 1$[J]. Statistics and Probability Letters, 2020, 157: 108628.
\bibitem{A14} Sghir A. A self-similar Gaussian process[J]. Random Operators and Stochastic Equations, 2014, 22(2): 85-92.
\bibitem{A16} Houdr\'{e} C, Villa J. An example of infinite dimensional quasi-helix[J]. Contemporary Mathematics, 2003, 336: 195-202.
\bibitem{A19} Bardina X, Es-Sebaiy K. An extension of bifractional Brownian motion[J]. Communications on Stochastic Analysis, 2011, 5(2): 333-340.
\bibitem{A20} El-Nouty C, Journe J L. The sub-bifractional Brownian motion[J]. Studia Scientiarum Mathematicarum Hungarica, 2013, 50(1): 67-121.
\bibitem{A21} Sghir A. The generalized sub-fractional Brownian motion[J]. Communications on Stochastic Analysis, 2013, 7(3): 2.
\bibitem{E12} Jolis M. On the Wiener integral with respect to the fractional Brownian motion on an interval[J]. Journal of mathematical analysis and applications, 2007, 330(2): 1115-1127.
\bibitem{A18} Nourdin I, Peccati G. Normal approximations with Malliavin calculus: from Stein's method to universality[M]. Cambridge University Press, 2012.
\bibitem{E15} Nourdin I, Peccati G. Stein's method and exact Berry-Esseen asymptotics for functionals of Gaussian fields[J]. The Annals of Probability, 2009, 37(6): 2231-2261.
\bibitem{A15} Athrtya K B, Lahiri S N. Measure theory and probability theory[M]. New York: Springer, 2006.
\bibitem{A12} Nourdin I. Selected aspects of fractional Brownian motion[M]. New York: Springer, 2012.
\bibitem{A22} Berg C, Christensen J P R, Ressel P. Harmonic analysis on semigroups: theory of positive definite and related functions[M]. New York: Springer, 1984.
\bibitem{A04} Zhang Yaoting, Fang Kaita. An introduction to multivariate statistical analysis[M]. Wuhan University Press, 2013.
\bibitem{E14} Hu Y, Nualart D. Parameter estimation for fractional Ornstein-Uhlenbeck processes[J]. Statistics and probability letters, 2010, 80(11-12): 1030-1038.
\end{thebibliography}

\end{document}